\documentclass[10pt,a4paper]{amsart}
\usepackage{amsmath,amsthm,amssymb,latexsym}
\usepackage{enumerate}
\usepackage{graphicx}

\newtheorem{theorem}{Theorem}
\newtheorem{lemma}[theorem]{Lemma}
\newtheorem{corollary}[theorem]{Corollary}
\newtheorem{proposition}[theorem]{Proposition}
\newtheorem{exa}[theorem]{Example}
\newtheorem{rema}[theorem]{Remark}

\newtheorem{defi}[theorem]{Definition}

\newtheorem{pro}[theorem]{Open Problem}

\newenvironment{remark}{\begin{rema}\rm}{\end{rema}}
\newenvironment{example}{\begin{exa}\rm}{\end{exa}}
\newenvironment{problem}{\begin{pro}\rm}{\end{pro}}
\numberwithin{equation}{section} \numberwithin{theorem}{section}


\title{Cayley automaton semigroups}
\author{Victor Maltcev}

\date{}

\begin{document}

\maketitle

\begin{center}
Mathematical Institute, University of St Andrews\\
St Andrews, Fife KY16 9SS, Scotland,
\texttt{victor.maltcev@gmail.com}
\end{center}

\begin{abstract}
In this paper we characterize when a Cayley automaton semigroup is a
group, is trivial, is finite, is free, is a left zero semigroup, or
is a right zero semigroup.
\end{abstract}


\section{Introduction \& Main Results}

In a bright work~\cite{SS} Silva \& Steinberg introduce the notion
of a \emph{Cayley automaton} of a semigroup: having a finite
semigroup $S$, let $\mathcal{C}(S)$ be the automaton with state set
$S$ and alphabet set $S$, obtained from the Cayley graph of $S$ by
letting the output symbol on the arc leading from $s$ and labeled by
$t$ to be $st$:

\begin{figure}[htp]
\centering
\includegraphics{cm_1auto.6}
\end{figure}
Every state from $\mathcal{C}(S)$ can be viewed as a transformation
on the set of all infinite sequences $S^{\infty}$. The semigroup
$\mathbf{C}(S)$ generated by all such transformations, associated to
the states of $\mathcal{C}(S)$, is obviously the automaton semigroup
generated by the automaton $\mathcal{C}(S)$ (in the sense
of~\cite{GNS} and~\cite{N-book}). Silva \& Steinberg prove the
following

\begin{theorem}\label{th:ss-groups}
Let $G$ be a finite non-trivial group. Then $\mathbf{C}(G)$ is a
free semigroup of rank $|G|$.
\end{theorem}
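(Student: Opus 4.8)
\medskip
\noindent\textbf{Proof proposal.}

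First I would make the action concrete: realise the state $s$ of $\mathcal{C}(G)$ as the bijection $\sigma_s\colon G^\infty\to G^\infty$, $\ x_1x_2x_3\cdots\mapsto (sx_1)(sx_1x_2)(sx_1x_2x_3)\cdots$. Write $\pi:=\sigma_1$ for the \emph{prefix--product} map $x_1x_2\cdots\mapsto x_1\,(x_1x_2)\,(x_1x_2x_3)\cdots$ (a bijection, whose inverse is the ``consecutive quotients'' map), $\lambda_s$ for the map multiplying \emph{every} coordinate on the left by $s$, and $\ell_s$ for the map multiplying \emph{only the first} coordinate on the left by $s$. Then $\sigma_s=\lambda_s\pi$, and the identity I would single out is
\[
\lambda_s\pi=\pi\ell_s\qquad\text{i.e.}\qquad \pi^{-1}\lambda_s\pi=\ell_s .
\]
Each $\ell_s$ is \emph{finitely supported}: it is the identity on all but the first coordinate.

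Next I would put a word $w=g_1g_2\cdots g_k$ over $G$ into normal form. Writing $\bar w$ for the corresponding element $\sigma_{g_1}\cdots\sigma_{g_k}$ of $\mathbf{C}(G)$, substituting $\sigma_{g_i}=\lambda_{g_i}\pi$ and repeatedly pushing each $\pi$ to the left via $\lambda_s\pi=\pi\ell_s$ and $\ell_s\pi=\pi(\pi^{-1}\ell_s\pi)$, one obtains
\[
\bar w=\pi^{\,k}\mu_w,\qquad \mu_w=\prod\nolimits_j\bigl(\pi^{-j}\ell_{s_j}\pi^{\,j}\bigr)\quad(0\le j\le k-1).
\]
Since $\pi^{-j}\ell_s\pi^{\,j}$ is the identity on all coordinates beyond the $(j{+}1)$-st (the section of $\pi^{\pm j}$ ``reaches back'' only $j$ steps), $\mu_w$ is finitely supported; and $\mu_w=\pi^{-k}\bar w$ is a bijection of $G^\infty$. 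As $\mathbf{C}(G)$ is generated by the $|G|$ pairwise distinct maps $\sigma_s$ ($\sigma_s(1^\infty)=s^\infty$), the theorem is equivalent to injectivity of $w\mapsto\bar w$ on the free semigroup on $G$.

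The length $k$ is recovered first. I would show that $\pi^{m}$ is \emph{not} finitely supported for any $m\ne0$: for $m<0$ this is immediate from the consecutive-quotients description, and for $m>0$ one uses $G\ne 1$ to pick (Cauchy) an element $c\in G$ of prime order $p$; on the $\pi$-invariant set $\langle c\rangle^\infty$ the map $\pi^{m}$ sends $(c,1,1,\dots)$ to $n\mapsto c^{\binom{n+m-2}{m-1}}$, whose exponents, read mod $p$, are the coefficients of $(1-t)^{-m}$ in $\mathbb{F}_p[[t]]$ --- of infinite support, since $1-t$ is a non-torsion unit. Hence $\pi^{m}$ is nontrivial on infinitely many coordinates, and since $\mu_w$ is finitely supported and bijective, $\pi^{-j}\bar w=\pi^{\,k-j}\mu_w$ is nontrivial on infinitely many coordinates whenever $j\ne k$. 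Thus $k$ is the \emph{unique} integer with $\pi^{-k}\bar w$ finitely supported, so $|w|$ is determined by $\bar w$ (in particular every $\bar w$ has infinite order). Knowing $k$, I would recover the letters by downward recursion: from $\bar w=\lambda_{g_k}\pi\,\bar{w''}$ with $w''=g_1\cdots g_{k-1}$ one gets $\pi^{-1}\bar w=\ell_{g_k}\bar{w''}$, hence for each $h\in G$, $\ \ell_{h^{-1}}\pi^{-1}\bar w=\pi^{\,k-1}\bigl(\pi^{-(k-1)}\ell_{h^{-1}g_k}\pi^{\,k-1}\bigr)\mu_{w''}$; a short computation (feed a sequence trivial in its first $k-1$ coordinates) shows that $\pi^{-(k-1)}\ell_a\pi^{\,k-1}$ is nontrivial on the $k$-th coordinate exactly when $a\ne1$, and this survives multiplication by the bijection $\mu_{w''}$, so $g_k$ is the unique $h$ for which $\pi^{-(k-1)}\ell_{h^{-1}}\pi^{-1}\bar w$ is supported on the first $k-1$ coordinates. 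Replacing $\bar w$ by $\bar{w''}=\ell_{g_k^{-1}}\pi^{-1}\bar w$ and iterating down to $k=1$ (where $g_1$ is read off the first-coordinate action of $\sigma_{g_1}$) recovers all letters.

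The step I expect to be the genuine obstacle is the one where $G\ne1$ is actually used, namely controlling powers of $\pi$. The self-similar structure of $\mathbf{C}(G)$ preserves word length, so the normal form $\bar w=\pi^{k}\mu_w$ --- which exposes $k$ --- together with the ``support growth'' of $\pi^{m}$ is what carries the proof, and the latter seems to require the explicit binomial/power-series computation above rather than any soft argument; the surrounding manipulations (producing the normal form, and the $k$-th-coordinate computation in the recursion) are routine but demand careful bookkeeping of exactly which coordinates each factor moves.
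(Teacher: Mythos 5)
The paper does not actually prove Theorem~\ref{th:ss-groups}: it is quoted from Silva \& Steinberg~\cite{SS}, so there is no internal proof to compare against. Judged on its own, your argument is correct and complete. The key identities check out: $\sigma_s=\lambda_s\pi=\pi\ell_s$ holds since $\pi\ell_s(x)=(sx_1,\,sx_1x_2,\dots)$; changing only the first input coordinate of $\pi^{-j}$ perturbs at most the first $j+1$ output coordinates (by induction from the consecutive-quotients formula), so each $\pi^{-j}\ell_{s}\pi^{j}$ with $j\le k-1$, and hence $\mu_w$, fixes all coordinates beyond $k$, and $\mu_w^{-1}$ does too since $\mu_w$ is a finitely supported bijection --- which is exactly what makes the recovery of $k$ from $\pi^{-k}\bar w=\mu_w$ work once one knows $\pi^m$ ($m\ne0$) is not finitely supported. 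Your $\mathbb{F}_p[[t]]$ computation for that last point is right: on $\langle c\rangle^\infty$ the exponent sequences transform by partial summation, $(1-t)^{-m}$ is not a polynomial over $\mathbb{F}_p$ by degree count, and the case $m<0$ follows from the positive case applied to the image of $(c,1,1,\dots)$. The letter-recovery step is also sound: $(\pi^{-(k-1)}\ell_a\pi^{k-1}(1^\infty))_k=a^{(-1)^{k-1}}\ne1$ for $a\ne1$, both factors fix coordinates $>k$, and precomposition with the surjection $\mu_{w''}$ (which fixes coordinate $k$) cannot destroy the discrepancy at coordinate $k$; so $g_k$ is the unique $h$ passing your test, and the downward recursion terminates at the first-level action. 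Two cosmetic points to fix in a write-up: your indexing wavers between treating $\sigma_{g_1}$ and $\sigma_{g_k}$ as the outermost (last-applied) factor --- harmless, since freeness is invariant under the anti-isomorphism between the two composition conventions, but it should be made consistent; and ``finitely supported'' should be stated explicitly as ``agrees with the identity on all coordinates beyond some $N$, uniformly in the input,'' since closure of this class under composition and inversion is what the uniqueness-of-$k$ argument silently uses.
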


Under slightly other prospective, Cayley automaton semigroups,
derived from monoids, appeared in a work by Mintz~\cite{Mi}. In
particular, he proves that if $S$ is a finite $\mathcal{H}$-trivial
monoid, then $\mathbf{C}(S)$ is a finite $\mathcal{H}$-trivial
semigroup.

The aim of this paper is the following theorems and propositions:

\begin{theorem}\label{th:whenisgroup}
For a finite semigroup $S$, the following statements are equivalent:
\begin{enumerate}
\item
$\mathbf{C}(S)$ is a group.
\item
$\mathbf{C}(S)$ is trivial.
\item
$S$ is an inflation of a right zero semigroup by null semigroups.
\end{enumerate}
\end{theorem}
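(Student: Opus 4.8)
The plan is to prove the cycle $(2)\Rightarrow(1)\Rightarrow(2)$ together with the equivalence $(2)\Leftrightarrow(3)$; since $(2)\Rightarrow(1)$ is immediate (a one-element semigroup is a group), all the content is in $(1)\Rightarrow(2)$ and in $(2)\Leftrightarrow(3)$. Everything rests on the explicit form of the action: writing $\overline{s}$ for the transformation of $S^{\infty}$ attached to the state $s$, unwinding the definition gives that the $n$-th letter of $\overline{s}(t_1t_2t_3\cdots)$ is $st_1t_2\cdots t_n$; in particular the first letter of any product $\overline{s_1}\cdots\overline{s_k}$ applied to $t_1t_2\cdots$ has the form $(s_1\cdots s_k)t_1$, so "action on the first letter'' defines a surjective homomorphism $\pi\colon\mathbf{C}(S)\twoheadrightarrow\Lambda(S)$, where $\Lambda(S)=\{\lambda_s:s\in S\}$ is the image of the left regular representation of $S$ (note $\lambda_s\lambda_t=\lambda_{st}$, so $s\mapsto\lambda_s$ is also a surjection $S\twoheadrightarrow\Lambda(S)$, and $|\Lambda(S)|=1$ means exactly that $sx=tx$ for all $s,t,x\in S$). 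The key reformulation I will establish is:
\[
\mathbf{C}(S)\text{ is trivial}\iff sx=tx\ \text{for all }s,t,x\in S.
\]
The direction $\Rightarrow$ follows from $\pi$. For $\Leftarrow$, assume $sx=tx$ for all $s,t,x$. Then $f(x):=sx$ is well defined; its image is $S^2$; every element of $S^2$ is a right zero of $S$, so $S^2$ is a right zero band and $f$ restricts to the identity on it; hence all $\overline{s}$ coincide (their $n$-th output letter $st_1\cdots t_n=f(t_1\cdots t_n)$ does not depend on $s$), and a short computation with the action formula — using that every output letter lies in the band $S^2$, so that upon applying $\overline{s}$ a second time the nested products telescope to their last factor — gives $\overline{s}^2=\overline{s}$. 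Thus $\mathbf{C}(S)=\{\overline{s}\}$ is trivial.

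For $(2)\Leftrightarrow(3)$ I will identify the condition "$sx=tx$ for all $s,t,x\in S$'' with "$S$ is an inflation of a right zero semigroup by null semigroups''. Under the condition, take $R:=S^2$ (a right zero semigroup) and $N_e:=f^{-1}(e)$ for $e\in R$, where $f(x)=sx$; then $S=\bigsqcup_{e\in R}N_e$ with $e\in N_e$, any product $ab$ equals $f(b)$ and so lies in $R$ and depends only on the $R$-component of $b$ — which is exactly the multiplication rule of the inflation, each $N_e$ being a null semigroup with zero $e$. Conversely, in any inflation of a right zero semigroup one has $ab=\varphi(a)\varphi(b)=\varphi(b)$ along the canonical retraction $\varphi$, which is independent of $a$, so $sx=tx$ for all $s,t,x$. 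This part is bookkeeping once the definitions are lined up.

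It remains to prove $(1)\Rightarrow(2)$, where Theorem~\ref{th:ss-groups} enters together with functoriality of the Cayley-automaton construction. First, a surjective homomorphism $\theta\colon S\twoheadrightarrow T$ induces a surjective homomorphism $\mathbf{C}(S)\twoheadrightarrow\mathbf{C}(T)$: the coordinatewise map $\theta_{\infty}\colon S^{\infty}\to T^{\infty}$ is onto and satisfies $\theta_{\infty}\circ\overline{s}=\overline{\theta(s)}\circ\theta_{\infty}$ (check it letterwise via $\theta(st_1\cdots t_n)=\theta(s)\theta(t_1)\cdots\theta(t_n)$), so composing generators and using surjectivity of $\theta_{\infty}$ shows that equal transformations of $S^{\infty}$ are sent to equal transformations of $T^{\infty}$. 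Now suppose $\mathbf{C}(S)$ is a group. Then its homomorphic image $\Lambda(S)=\pi(\mathbf{C}(S))$ is a finite group, onto which $S$ surjects via $s\mapsto\lambda_s$. If $\Lambda(S)$ were non-trivial, the induced surjection $\mathbf{C}(S)\twoheadrightarrow\mathbf{C}(\Lambda(S))$ would present the free semigroup $\mathbf{C}(\Lambda(S))$ of rank $|\Lambda(S)|\ge2$ (Theorem~\ref{th:ss-groups}) as a homomorphic image of a group — impossible, since a non-trivial free semigroup has no identity and so is not a group. Hence $|\Lambda(S)|=1$, i.e.\ $sx=tx$ for all $s,t,x$, and by the reformulation $\mathbf{C}(S)$ is trivial.

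The main obstacle I anticipate is making the reformulation airtight: the condition $sx=tx$ for all $s,t,x$ equates the generators only on the first letter, and one must verify that this already forces $\overline{s}^2=\overline{s}$ on all of $S^{\infty}$; this is precisely where the observation that every output letter falls into the right zero band $S^2$ — so that nested products telescope — does the work. The remaining points (matching the definition of "inflation of a right zero semigroup by null semigroups'' with the retraction picture, and checking that $\Lambda(S)$ satisfies the hypotheses of Theorem~\ref{th:ss-groups}, which it does since it is a finite semigroup that is a homomorphic image of the group $\mathbf{C}(S)$, hence a finite group) are routine.
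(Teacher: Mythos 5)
Your proposal is correct, and the hard implication is handled by a genuinely different and considerably shorter route than the paper's. The paper proves $(1)\Rightarrow(3)$ directly by induction on $|S|$: it first extracts from the group hypothesis that the maps $\lambda_s$ have common image and common kernel, then splits into the case $S^2=S$ (where $S$ is a right group $G\times R$, and Lemma~\ref{lm:direct} plus Theorem~\ref{th:ss-groups} force $G$ trivial) and the case $S^2\neq S$ (a delicate analysis of indecomposable elements, kernel classes, and a subsemigroup $T=S\setminus\{a\}$ to which the induction hypothesis is applied). You instead pass to the quotient $\Lambda(S)=S/{\sim}$, where $s\sim t$ iff $\lambda_s=\lambda_t$, observe that $\mathbf{C}(-)$ is functorial for surjective homomorphisms via the intertwining $\theta_\infty(\alpha\cdot\overline{s})=\theta_\infty(\alpha)\cdot\overline{\theta(s)}$, and kill $\Lambda(S)$ in one stroke: it is a finite group (image of the group $\mathbf{C}(S)$ under the first-level projection) and, were it non-trivial, $\mathbf{C}(\Lambda(S))$ would be a non-trivial free semigroup arising as a homomorphic image of a group. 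This avoids the induction and the case analysis entirely; the price is the functoriality lemma, which the paper never states (its Subcase~b uses only the easier restriction-to-a-subsemigroup homomorphism) but which is a routine letterwise verification exactly as you describe. Your reformulation of (2) and (3) as ``$sx=tx$ for all $s,t,x$'' is also correct: the telescoping $st_1\cdots t_n=f(t_n)$ with $f$ the identity on the right zero band $S^2$ does give $\overline{s}^2=\overline{s}$, matching the paper's $(3)\Rightarrow(2)$. One small point to fix: with the paper's conventions the first letter of $\alpha\cdot\overline{s_1}\cdots\overline{s_k}$ is $(s_k\cdots s_1)t_1$, not $(s_1\cdots s_k)t_1$, so the first-level projection is an anti-homomorphism onto $(\Lambda(S),\lambda_s\ast\lambda_t=\lambda_{st})$; this is harmless since the opposite of a group is a group, but it should be said.
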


We prove Theorem~\ref{th:whenisgroup} in Section~\ref{sec:group}.

\begin{theorem}\label{th:finite}
Let $S$ be a finite semigroup. Then $\mathbf{C}(S)$ is finite if and
only if $S$ is $\mathcal{H}$-trivial.
\end{theorem}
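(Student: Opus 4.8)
\medskip
\noindent\textbf{Proof idea.}
The plan is to derive both implications from the two results already at our disposal — Theorem~\ref{th:ss-groups} for the direction ``$\mathbf{C}(S)$ finite $\Rightarrow$ $S$ is $\mathcal{H}$-trivial'' and the theorem of Mintz~\cite{Mi} for the converse — using one and the same device: restriction to an invariant subalphabet. Recall that the state $s$ of $\mathcal{C}(S)$ acts on $S^\infty$ by $f_s(t_1t_2t_3\cdots)=(st_1)(st_1t_2)(st_1t_2t_3)\cdots$, so that $\mathbf{C}(S)=\langle f_s:s\in S\rangle$. The observation I will use twice is the following: whenever $A$ is a subsemigroup of a finite semigroup $B$, every entry of $f^B_a(\vec t)$ with $a\in A$ and $\vec t\in A^\infty$ is a product of elements of $A$, so $A^\infty$ is invariant under $f^B_a$, and $f^B_a|_{A^\infty}$ is precisely the transformation $f^A_a$ of $A^\infty$ coming from the state $a$ of $\mathcal{C}(A)$. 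Hence restriction to $A^\infty$ is a surjective homomorphism $\langle f^B_a:a\in A\rangle\to\mathbf{C}(A)$, and $\langle f^B_a:a\in A\rangle$ is a subsemigroup of $\mathbf{C}(B)$.

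For ``$\mathbf{C}(S)$ finite $\Rightarrow$ $S$ is $\mathcal{H}$-trivial'' I argue by contraposition. A finite semigroup is $\mathcal{H}$-trivial if and only if it has no nontrivial subgroup, so if $S$ is not $\mathcal{H}$-trivial it contains a nontrivial subgroup $G$. Applying the observation with $B=S$ and $A=G$, restriction to $G^\infty$ maps the subsemigroup $\langle f_g:g\in G\rangle$ of $\mathbf{C}(S)$ onto $\mathbf{C}(G)$, which by Theorem~\ref{th:ss-groups} is a free semigroup of rank $|G|\ge 2$ and hence infinite; so $\mathbf{C}(S)$ is infinite.

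For the converse, suppose $S$ is $\mathcal{H}$-trivial. If $S$ is a monoid we are done by Mintz's theorem, so assume otherwise and put $B=S^1=S\cup\{1\}$. Since $1$ is alone in its $\mathcal{H}$-class of $S^1$ and the $\mathcal{H}$-classes of the elements of $S$ are unchanged on passing to $S^1$, the finite monoid $S^1$ is again $\mathcal{H}$-trivial, so $\mathbf{C}(S^1)$ is finite by Mintz's theorem. Applying the observation with $A=S\le B=S^1$, restriction to $S^\infty$ maps the subsemigroup $\langle f^{S^1}_s:s\in S\rangle$ of $\mathbf{C}(S^1)$ onto $\mathbf{C}(S)$; as a homomorphic image of a finite semigroup, $\mathbf{C}(S)$ is finite.

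Once the two cited theorems are granted, the argument is essentially bookkeeping: the auxiliary facts used — that a finite semigroup is $\mathcal{H}$-trivial exactly when it has no nontrivial subgroup, and that adjoining an identity preserves $\mathcal{H}$-triviality — are standard, and the only point needing care is keeping straight which transformation acts on which sequence space, so I do not anticipate a real obstacle. (If one instead wanted the converse without Mintz's theorem, the genuine work would be to bound the iterated sections of a product: a short computation shows that the section of $f_{s_1}\cdots f_{s_m}$ at a letter $t$ equals $f_{p_1t}f_{p_2t}\cdots f_{p_mt}$ with $p_j=s_js_{j+1}\cdots s_m$, and one would have to use the identity $x^n=x^{n+1}$ valid in an aperiodic $S$ to conclude that only finitely many transformations arise this way — which is in effect a re-proof of Mintz's result.)
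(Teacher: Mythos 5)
Your argument is correct, but it follows a genuinely different route from the paper in both directions. For necessity the paper does not extract a subgroup of $S$: it takes a non-singleton $\mathcal{H}$-class $H$, forms $T=\{t\in S:tH\subseteq H\}$ and the dual Sch\"{u}tzenberger group $\Delta(H)$, and shows that restricting the states $\overline{t}$, $t\in T$, to $H^{\ast}$ gives a surjection of $\langle\overline{T}\rangle$ onto $\mathbf{C}(\Delta(H))$, which is free of rank $|H|$ by Theorem~\ref{th:ss-groups}. Your version---use the standard fact that a finite non-$\mathcal{H}$-trivial semigroup contains a nontrivial subgroup $G$ and restrict to $G^{\infty}$---is cleaner as an automaton computation, but the semigroup theory has only been relocated: the usual proof that ``group-free implies $\mathcal{H}$-trivial'' for finite semigroups is essentially the Sch\"{u}tzenberger-group argument the paper inlines, and the paper's formulation has the extra payoff of producing a free image of rank $|H|$ for each large $\mathcal{H}$-class, which is what Proposition~\ref{pr:important} quotes later. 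The real divergence is in sufficiency: you adjoin an identity (correctly observing that $S^1$ stays $\mathcal{H}$-trivial and that restriction to the invariant set $S^{\infty}$ realizes $\mathbf{C}(S)$ as a quotient of a subsemigroup of the finite $\mathbf{C}(S^1)$) and then invoke Mintz's theorem wholesale, whereas the paper's Section~\ref{sec:finite} is a self-contained induction on $|S|$---peeling off the ideal $I$ complementary to the maximal $\mathcal{D}$-classes and controlling $\langle\overline{I}\rangle$, $\overline{I}\langle\overline{S}\rangle$ and $\langle\overline{S}\setminus\overline{I}\rangle$ in turn, the last via a delicate pigeonhole count of idempotents. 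Your reduction is a neat observation that the paper does not make and is perfectly valid if~\cite{Mi} is granted as a black box, but it buys brevity by outsourcing all of the hard analysis to that reference; the paper's long proof is precisely an independent derivation of that content for arbitrary finite $\mathcal{H}$-trivial semigroups, not just monoids.
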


The proof of sufficiency of Theorem~\ref{th:finite} in the case when
$S$ is a monoid was proved in~\cite{Mi}. The analogous
characterization for the so-called \emph{dual Cayley automaton
semigroups} can be found in~\cite{M}. The proof of
Theorem~\ref{th:finite} is contained in Section~\ref{sec:finite}.

Making use of Theorem~\ref{th:finite}, in Sections~\ref{sec:free}
and~\ref{sec:left-right} we will prove the following three
propositions:

\begin{proposition}\label{pr:free}
Let $S$ be a finite semigroup. Then $\mathbf{C}(S)$ is a free
semigroup if and only if the minimal ideal $K$ of $S$ consists of a
single $\mathcal{R}$-class, in which every $\mathcal{H}$-class is
not a singleton, and there exists $k\in K$ such that $st=skt$ for
all $s,t\in S$.
\end{proposition}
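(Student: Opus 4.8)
The plan is to prove both implications by analysing how a word $s_1\cdots s_m$ over $S$ acts, via the composite $f_{s_1}\circ\cdots\circ f_{s_m}$, on infinite sequences lying in (a suitable slice of) the minimal ideal $K$; recall that $f_s$ sends $t_1t_2\cdots$ to $(st_1)(st_1t_2)(st_1t_2t_3)\cdots$, so that the $n$-th output letter is $s\,(t_1\cdots t_n)$. I would combine this with Theorem~\ref{th:ss-groups} (freeness of $\mathbf{C}(G)$ for a non-trivial group $G$), Theorem~\ref{th:finite} (an infinite $\mathbf{C}(S)$ forces $S$ not $\mathcal{H}$-trivial), and the elementary facts that a free semigroup is infinite, cancellative on both sides, and idempotent-free.

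For ``$(3)\Rightarrow\mathbf{C}(S)$ free'': writing $e$ for the identity of the maximal subgroup of $K$ containing $k$, substituting $t=e$ in $st=skt$ gives $se=sk$, hence $st=set$ for all $s,t\in S$, so we may take $k=e$ idempotent and absorbed in every product. Condition~(1) makes $K$ a Rees matrix semigroup over $G$ with a single $\mathcal{R}$-class, i.e.\ after normalisation $K\cong G\times\Lambda$ with $(g,\lambda)(h,\mu)=(gh,\mu)$, and $|G|\geq 2$ by~(2). A short computation produces a surjective homomorphism $a\colon S\to G$ with $s\cdot(g,\lambda)=(a_sg,\lambda)$, and since $su=(se)u=(a_s,\lambda_e)u$ one gets $f_s=f_{s'}\iff a_s=a_{s'}$. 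Restricting each element of $\mathbf{C}(S)$ to the invariant set of sequences $(g_1,\lambda_0)(g_2,\lambda_0)\cdots$ of constant $\Lambda$-coordinate, $f_s$ acts by $(g_n)\mapsto(a_sg_1\cdots g_n)$; identifying these sequences with $G^\infty$ this is precisely the action of the generator $a_s$ of $\mathbf{C}(G)$. This yields a surjective homomorphism $\mathbf{C}(S)\to\mathbf{C}(G)$, $f_s\mapsto f_{a_s}$; as $\mathbf{C}(G)$ is free on its $|G|\geq 2$ generators (Theorem~\ref{th:ss-groups}), an identity $f_{s_1}\circ\cdots\circ f_{s_m}=f_{s'_1}\circ\cdots\circ f_{s'_{m'}}$ forces $m=m'$ and $a_{s_i}=a_{s'_i}$, hence $f_{s_i}=f_{s'_i}$, for all $i$; so the homomorphism is injective and $\mathbf{C}(S)\cong\mathbf{C}(G)$ is free.

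For the converse, assume $\mathbf{C}(S)$ is free: it is infinite, cancellative, idempotent-free, and by Theorem~\ref{th:finite} $S$ is not $\mathcal{H}$-trivial. Write $K=\mathcal{M}(G;I,\Lambda;P)$. Its $\mathcal{H}$-classes are copies of $G$, so for~(2) it suffices to exclude $G$ trivial; but then $K$ is a rectangular band, in which every product equals (first factor)(last factor), and a direct check gives $f_k^2=f_k$ for each $k\in K$, contradicting idempotent-freeness. For~(3): fix an idempotent $e\in K$; the image of $f_e$ equals $R_e^\infty$ (the $\mathcal{R}$-class $R_e$ is a subsemigroup on which $e$ is a left identity), $f_s$ and $f_{se}$ agree there, so $f_s\circ f_e=f_{se}\circ f_e$, and right cancellation gives $f_s=f_{se}$, i.e.\ $su=seu$ for all $s,u\in S$ — that is~(3) with $k=e$. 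For~(1): applying right cancellation to $f_e\circ f_e=f_{e'}\circ f_e$ for idempotents $e,e'$ in a common $\mathcal{R}$-class gives $f_e=f_{e'}$, whose instances $eu=e'u$ ($u\in K$) force, after normalising, $P$ to be the identity matrix; if now $|I|\geq 2$, pick idempotents $e',e''$ in a common $\mathcal{L}$-class but distinct $\mathcal{R}$-classes (so $e''e'=e''$) and check that $f_{e'}\circ f_{e'}$ and $f_{e'}\circ f_{e''}$ agree on all of $S^\infty$ while $f_{e'}\neq f_{e''}$ — contradicting left cancellation. Hence $|I|=1$.

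The crux, I expect, is the derivation of~(1): one must choose $e',e''$ correctly (same $\mathcal{L}$-class, distinct $\mathcal{R}$-classes), justify reducing $P$ to the identity via the same-$\mathcal{R}$-class cancellation, and then push through the bookkeeping that $f_{e'}\circ f_{e'}$ and $f_{e'}\circ f_{e''}$ coincide on \emph{all} of $S^\infty$ and not merely on $K^\infty$; this uses that left multiplication by an element of $K$ sends $S$ into that element's $\mathcal{R}$-class, that $\mathcal{L}$ is a right congruence, and the relation $e''e'=e''$. By contrast, conditions~(2) and~(3) fall out rapidly from idempotent-freeness and from right cancellation, and the ``if'' direction is essentially a reduction to Theorem~\ref{th:ss-groups}.
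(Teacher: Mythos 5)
Your proposal is correct and follows essentially the same route as the paper: necessity by applying cancellativity of the free semigroup $\mathbf{C}(S)$ to carefully chosen equal products (detected via the wreath recursion and Lemma~\ref{lm:lambda}) to pin down the Rees structure of $K$, and sufficiency by collapsing $\overline{S}$ onto a single $\mathcal{H}$-class and reducing to Theorem~\ref{th:ss-groups}. The only departures are minor: you rule out singleton $\mathcal{H}$-classes in $K$ by exhibiting the idempotent $f_k$ directly rather than invoking Theorem~\ref{th:finite}, and you reach the single-$\mathcal{R}$-class conclusion by first forcing $P$ to be trivial and then cancelling $f_{e'}\circ f_{e'}=f_{e'}\circ f_{e''}$ for $\mathcal{L}$-equivalent idempotents, instead of the paper's cancellation of $\overline{(i,e,1)}\cdot\overline{(h,e,1)}=\overline{(j,e,1)}\cdot\overline{(h,e,1)}$ --- both are sound.
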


\begin{proposition}\label{pr:right-zero}
Let $S$ be a finite semigroup. Then $\mathbf{C}(S)$ is a right zero
semigroup if and only if $abc=ac$ for all $a,b,c\in S$.
\end{proposition}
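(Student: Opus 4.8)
The plan is to reduce the statement to a direct computation of how the generating transformations of $\mathbf{C}(S)$ compose. Write $\widehat{s}$ for the transformation of $S^{\infty}$ induced by the state $s$ of $\mathcal{C}(S)$; unwinding the automaton, one sees that for $w=a_{1}a_{2}a_{3}\cdots\in S^{\infty}$,
$$\widehat{s}(w)=(sa_{1})(sa_{1}a_{2})(sa_{1}a_{2}a_{3})\cdots,$$
and that, with the standard convention for automaton semigroups, the product $\widehat{s}\,\widehat{u}$ in $\mathbf{C}(S)$ acts as the composite $\widehat{u}\circ\widehat{s}$. Since $\mathbf{C}(S)$ is generated by $\{\widehat{s}:s\in S\}$, it is a right zero semigroup if and only if $\widehat{s}\,\widehat{u}=\widehat{u}$ for all $s,u\in S$: one implication is immediate from the definition of a right zero semigroup, and the converse follows by an easy induction showing that every product of generators collapses to its last factor, so that $\mathbf{C}(S)=\{\widehat{s}:s\in S\}$. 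Thus the task becomes: $\widehat{u}\circ\widehat{s}=\widehat{u}$ for all $s,u\in S$ if and only if $abc=ac$ for all $a,b,c\in S$.

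For the forward direction I would simply compare first letters. The first letter of $(\widehat{u}\circ\widehat{s})(w)$ is $u(sa_{1})=usa_{1}$, while the first letter of $\widehat{u}(w)$ is $ua_{1}$; requiring these to coincide for all $u,s,a_{1}\in S$ is precisely the identity $abc=ac$ after renaming.

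For the reverse direction, assume $abc=ac$ holds identically in $S$. The key observation is that this hypothesis trivialises $\widehat{s}$: for $i\geq 2$ we get $sa_{1}\cdots a_{i}=(sa_{1}\cdots a_{i-1})a_{i}=(sa_{i-1})a_{i}=sa_{i-1}a_{i}=sa_{i}$ by induction, so the $i$-th letter of $\widehat{s}(w)$ equals $sa_{i}$ for every $i\geq 1$; that is, $\widehat{s}$ acts on $S^{\infty}$ componentwise by left multiplication by $s$. Consequently the $i$-th letter of $\widehat{u}(\widehat{s}(w))$ is $u(sa_{i})=usa_{i}=ua_{i}$, the final equality again using $abc=ac$, and this is exactly the $i$-th letter of $\widehat{u}(w)$. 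Hence $\widehat{u}\circ\widehat{s}=\widehat{u}$, completing the argument.

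I do not expect a serious obstacle here. The only point requiring genuine care is fixing the composition convention in $\mathbf{C}(S)$ correctly, since this is what distinguishes the answer $abc=ac$ from its mirror image $abc=bc$; and the one idea that makes the proof short is noticing that under $abc=ac$ each generator $\widehat{s}$ becomes a componentwise map, which is what causes the otherwise unwieldy composite $\widehat{s}\,\widehat{u}$ to collapse.
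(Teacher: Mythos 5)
Your proof is correct and follows essentially the same route as the paper: the forward direction is the paper's argument verbatim (comparing first-level actions gives $\lambda_{ab}=\lambda_a$, i.e.\ $abc=ac$), and the reduction to checking $\widehat{s}\,\widehat{u}=\widehat{u}$ on generators, together with your composition convention, matches the paper's setup. The only (harmless) difference is in the backward direction, where the paper runs a short recursive argument on states while you make the nice explicit observation that under $abc=ac$ each generator acts letterwise by left multiplication, which settles the composition by direct computation.
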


\begin{proposition}\label{pr:left-zero}
Let $S$ be a finite semigroup. Then $\mathbf{C}(S)$ is a left zero
semigroup if and only if $S^2$ is the minimal ideal of $S$ and if
this ideal forms a right zero semigroup.
\end{proposition}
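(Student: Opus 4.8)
The plan is to reduce the statement, just as in the proof of Proposition~\ref{pr:right-zero}, to a single identity in $S$, and then to translate that identity into the asserted structural condition. Recall that for $u\in S$ and $w=(w_1,w_2,\dots)\in S^{\infty}$ the word $\bar u(w)$ has $i$-th letter $uw_1w_2\cdots w_i$. First I would note that $\mathbf C(S)$ is a left zero semigroup if and only if $\bar s\,\bar t=\bar s$ for all $s,t\in S$: if this holds for all generators then every product of generators collapses to its first factor, so $\mathbf C(S)=\{\bar s:s\in S\}$ and the semigroup is left zero, and the converse is immediate. Writing out $\bar s\,\bar t=\bar s$ on the $i$-th letter gives
$$t\,(sw_1)(sw_1w_2)\cdots(sw_1\cdots w_i)=sw_1\cdots w_i$$
for all $i\ge 1$ and all $w_j\in S$. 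Taking $i=1$ and letting $s,t,w_1$ vary yields the identity $abc=bc$ for all $a,b,c\in S$, which says exactly that every element of $S^2$ is a right zero of $S$. Conversely, for $i\ge 1$ the rightmost factor $sw_1\cdots w_i$ is a product of at least two elements, hence lies in $S^2$ and is therefore a right zero of $S$, so under $abc=bc$ the left-hand side collapses to that rightmost factor and the displayed equalities all hold. Thus $\mathbf C(S)$ is a left zero semigroup if and only if every element of $S^2$ is a right zero of $S$.

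It then remains to show that ``every element of $S^2$ is a right zero of $S$'' is equivalent to ``$S^2$ is the minimal ideal $K$ of $S$ and $K$ forms a right zero semigroup.'' For the forward direction, if every $x\in S^2$ satisfies $yx=x$ for all $y\in S$, then in particular $xy=y$ for $x,y\in S^2$, so the subsemigroup $S^2$ is a right zero semigroup; since $S^2$ is always a two-sided ideal of $S$ (a product of at least two elements of $S$ lies in $S^2$, so $S\cdot S^2,\ S^2\cdot S\subseteq S^2$) and a right zero semigroup is simple, an ideal of $S$ that is simple must equal $K$, so $S^2=K$. For the converse, assume $S^2=K$ is a right zero semigroup, fix any $e\in K$ (nonempty since $S$ is finite), and take $a\in S$, $x\in K$; then
$$ax=a(ex)=(ae)x=x,$$
using $ex=x$ (as $e,x\in K$), associativity, $ae\in K$ (as $K$ is an ideal), and $(ae)x=x$ (as $ae,x\in K$). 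This establishes the Proposition.

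The only mildly delicate point is the passage from the $i=1$ case of the displayed identity to all $i$, where one must observe that the later letters of $\bar s(w)$ already lie in $S^2$ so that the hypothesis applies to them; the rest is bookkeeping, the main subtlety being that $S^2$ is automatically a two-sided ideal, so the ``minimal ideal'' clause is really a reformulation of ``$S^2$ is a right zero semigroup'' rather than an independent requirement. Finiteness enters only to guarantee that $K$ exists and is nonempty. (One could instead start by invoking Theorem~\ref{th:finite}, since a left zero semigroup is $\mathcal H$-trivial and hence $\mathbf C(S)$ left zero forces $S$ to be $\mathcal H$-trivial, but this is not needed for the argument above.)
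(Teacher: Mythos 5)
Your proof is correct, but it takes a genuinely different route from the paper, most visibly in the necessity direction. The paper first invokes Theorem~\ref{th:finite} to get that $S$ is $\mathcal{H}$-trivial, writes the minimal ideal $I$ as a rectangular band $X\times Y$, computes $\mathbf{C}(I)$ via Lemmas~\ref{lm:left-zero} and~\ref{lm:direct} to force $|X|=1$, and then uses Lemma~\ref{lm:lambda} to show $S^2\subseteq I$; you instead read the single identity $abc=bc$ directly off the first output letter of $\bar s\,\bar t=\bar s$ and then do elementary ideal bookkeeping (your facts that $S^2$ is always an ideal, that a right zero semigroup is simple, and that a simple ideal is the kernel all check out, the last because the kernel is an ideal of any simple ideal containing it). For sufficiency the two arguments share the same key observation --- that each $\bar t$ acts identically on $(S^2)^\infty$ because every element of $S^2$ is a right zero --- though the paper routes it through Lemma~\ref{lm:lambda} to collapse $\overline{S}$ to $\overline{I}$ first, while you verify the letter-by-letter identity directly, with the $e$-insertion trick $ax=(ae)x=x$ supplying the needed fact that elements of $K$ are right zeros of all of $S$, not just of $K$. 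Your version is more elementary and self-contained (no appeal to Theorem~\ref{th:finite} or Green's relations), and it has the added benefit of producing an identity characterization, $abc=bc$ for all $a,b,c\in S$, exactly parallel to the $abc=ac$ characterization in Proposition~\ref{pr:right-zero}; the paper's version buys uniformity with the rest of its machinery. One tiny presentational caveat: make sure your composition convention ($\bar s\,\bar t$ meaning ``apply $\bar s$ first'') matches the paper's right-action convention --- it does, as the wreath recursion $\overline{s}\cdot\overline{t}=\lambda_{ts}(\cdots)$ confirms, but the identity you extract would come out as $abc=ac$ rather than $abc=bc$ under the opposite convention, so it is worth stating explicitly.
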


In the final Section~\ref{sec:further} we will discuss our main
results and their corollaries. Before we start proving our
statements in the next section we give all necessary notation and
lemmas needed for the proofs.


\section{Auxiliary Lemmas}\label{sec:aux}

Let $S=\{s_1,\ldots,s_n\}$ be a finite semigroup. In order to avoid
confusion, we will denote the states in $\mathcal{C}(S)$ by an overline: $s$ is a symbol,
and $\overline{s}$ is a state. The sequences from $S^{\infty}$ can
be vied as paths in the infinite $S$-rooted tree. So,
following~\cite{N-book}, we will think about $\overline{s}$ in terms
of the \emph{wreath recursion}:
\begin{equation*}
\overline{s}=\lambda_s(\overline{ss_1},\ldots,\overline{ss_n}),
\end{equation*}
where $\lambda_s:S\to S$, defined by $x\mapsto sx$, corresponds to
the action of $\overline{s}$ on the first level of the $S$-tree.
Notice that $\lambda_s\lambda_t=\lambda_{ts}$ for all $s,t\in S$.
Hence
\begin{equation*}
\overline{s}\cdot\overline{t}=\lambda_{ts}(\overline{ss_1}\cdot\overline{tss_1},\ldots,
\overline{ss_n}\cdot\overline{tss_n}).
\end{equation*}
Iterating this formula one obtains that when automaton
$\mathcal{C}(S)$ is in state $\overline{a_1}\cdots\overline{a_k}$
and reads a symbol $x$, it moves to the state
\begin{equation*}
q(\overline{a_1}\cdots\overline{a_k},x)=\overline{a_1x}\cdot\overline{a_2a_1x}\cdots\overline{a_k\cdots
a_1x}.
\end{equation*}
The transformation $\tau(\overline{a_1}\cdots\overline{a_k})$ on the
set $S$, correspondent to the action of
$\overline{a_1}\cdots\overline{a_k}$ on the first level of the
$S$-tree is obviously $\lambda_{a_k\cdots a_1}$.

\begin{lemma}\label{lm:lambda}
Let $S$ be a finite semigroup. Then for all $s,t\in S$,
$\overline{s}=\overline{t}$ in $\mathbf{C}(S)$ if and only if
$\lambda_s=\lambda_t$.
\end{lemma}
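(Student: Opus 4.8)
The plan is to prove the two implications separately, the forward one being essentially immediate and the backward one by an easy induction on the length of the input prefix.

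For the forward direction, suppose $\overline{s}=\overline{t}$ as transformations of $S^{\infty}$. Then in particular $\overline{s}$ and $\overline{t}$ act identically on the first level of the $S$-tree; but by the wreath recursion $\overline{s}=\lambda_s(\overline{ss_1},\ldots,\overline{ss_n})$ this first-level action is exactly $\lambda_s$, and likewise for $\overline{t}$ it is $\lambda_t$. Hence $\lambda_s=\lambda_t$.

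For the backward direction, assume $\lambda_s=\lambda_t$, i.e.\ $sx=tx$ for every $x\in S$. I claim that for every word $w\in S^*$ the automaton $\mathcal{C}(S)$, started in state $\overline{s}$ and started in state $\overline{t}$, produces the same output on reading $w$ and ends in the same state. This is proved by induction on $|w|$ using the output and transition formulas: reading a single symbol $x$ from $\overline{s}$ outputs $\lambda_s(x)=sx$ and moves to $q(\overline{s},x)=\overline{sx}$ (the transition formula specialized to a single generator), while from $\overline{t}$ it outputs $tx$ and moves to $\overline{tx}$; since $sx=tx$, both the output symbols and the new states coincide — indeed $sx$ and $tx$ are literally the same element of $S$, hence $\overline{sx}$ and $\overline{tx}$ are the same state — so one continues from a common state and the inductive step goes through. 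Applying this with $w$ ranging over all finite prefixes of an arbitrary sequence in $S^{\infty}$ shows that $\overline{s}$ and $\overline{t}$ induce the same transformation, i.e.\ $\overline{s}=\overline{t}$ in $\mathbf{C}(S)$.

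There is no real obstacle here; the only point worth stating explicitly is that the general transition formula $q(\overline{a_1}\cdots\overline{a_k},x)=\overline{a_1x}\cdot\overline{a_2a_1x}\cdots\overline{a_k\cdots a_1x}$ collapses, for a one-generator state, to $q(\overline{s},x)=\overline{sx}$, so the induction stays within single states and closes cleanly.
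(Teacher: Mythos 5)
Your proof is correct and takes essentially the same route as the paper: both unfold the wreath recursion one level and exploit that $\lambda_s=\lambda_t$ forces $sx=tx$ for all $x$. Your remark that $\overline{sx}$ and $\overline{tx}$ are then literally the same state even short-circuits the recursion that the paper instead carries through via $\lambda_{sx}=\lambda_{tx}$.
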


\begin{proof}
Let $s,t\in S$. Then $\overline{s}=\overline{t}$ if and only if
$\lambda_s=\lambda_t$ and $\overline{sx}=\overline{tx}$ for all
$x\in S$. Recursing this, we obtain that $\overline{s}=\overline{t}$
if and only if $\lambda_s=\lambda_t$ and $\lambda_{sx}=\lambda_{tx}$
for all $x\in S$. It remains to notice that $\lambda_s=\lambda_t$
implies $\lambda_{sx}=\lambda_{tx}$ for all $x\in S$.
\end{proof}

Now we calculate Cayley automaton semigroups of special type
semigroups:

\begin{lemma}\label{lm:left-zero}
Let $L$ be a finite left zero semigroup. Then $\mathbf{C}(L)$ is a
right zero semigroup with $|L|$ elements.
\end{lemma}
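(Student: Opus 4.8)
The plan is to exploit the defining identity $st=s$ of a left zero semigroup so as to pin down every map $\lambda_s$, and then to trace it through the wreath recursion and the formulas for $q$ and $\tau$ from Section~\ref{sec:aux}. First I would observe that for $s\in L$ the map $\lambda_s\colon L\to L$, $x\mapsto sx=s$, is the constant map onto $\{s\}$; in particular $\lambda_s=\lambda_t$ if and only if $s=t$, so by Lemma~\ref{lm:lambda} the states $\overline{s_1},\ldots,\overline{s_n}$ are pairwise distinct in $\mathbf{C}(L)$, whence $|\mathbf{C}(L)|\ge|L|$.

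Next I would analyse an arbitrary product of states. For $\overline{a_1}\cdots\overline{a_k}$ with $a_i\in L$, the left zero identity gives $a_j\cdots a_1x=a_j$ for every $j$ and every $x\in L$, so
\[
q(\overline{a_1}\cdots\overline{a_k},x)=\overline{a_1x}\cdot\overline{a_2a_1x}\cdots\overline{a_k\cdots a_1x}=\overline{a_1}\cdot\overline{a_2}\cdots\overline{a_k},
\]
i.e.\ reading any letter leaves the state unchanged, while $\tau(\overline{a_1}\cdots\overline{a_k})=\lambda_{a_k\cdots a_1}=\lambda_{a_k}$ is the constant map onto $\{a_k\}$. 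Iterating, $\overline{a_1}\cdots\overline{a_k}$ acts on $L^\infty$ as the constant transformation sending every sequence to $a_ka_ka_k\cdots$; the same computation applied to the single state $\overline{a_k}$ shows it has exactly the same action, so $\overline{a_1}\cdots\overline{a_k}=\overline{a_k}$ in $\mathbf{C}(L)$. Hence $\mathbf{C}(L)=\{\overline{s}:s\in L\}$ has exactly $|L|$ elements, and for all $s,t\in L$ we obtain $\overline{s}\cdot\overline{t}=\overline{t}$, which is precisely the assertion that $\mathbf{C}(L)$ is a right zero semigroup on $|L|$ elements.

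I do not expect a genuine obstacle here: once the identity $st=s$ is substituted, the behaviour of every state and every product is completely forced. The only point needing a little care is the count $|\mathbf{C}(L)|=|L|$ rather than just $|\mathbf{C}(L)|\le|L|$, which is why I separate the generators via Lemma~\ref{lm:lambda} instead of arguing directly on infinite sequences.
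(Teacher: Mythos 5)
Your proposal is correct and follows essentially the same route as the paper: both arguments boil down to observing that in a left zero semigroup each state $\overline{s}$ acts as the constant transformation $\alpha\mapsto s^{\infty}$ (the paper says this directly on sequences, you say it via the $q$/$\tau$ formulas), so products collapse to their last factor, and Lemma~\ref{lm:lambda} separates the $|L|$ generators.
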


\begin{proof}
Suppose $\mathcal{C}(L)$ is in state $\overline{s}$ and reads symbol
$t$. Then, by the definition of $\mathcal{C}(L)$, it outputs $s$ and
moves to the same state $\overline{s}$. Thus
$\alpha\cdot\overline{s}=s^{\infty}$ for all $\alpha\in L^{\infty}$.
Hence for any $s,t\in S$ and $\alpha\in L^{\infty}$:
\begin{equation*}
\alpha\cdot(\overline{s}\cdot\overline{t})=s^{\infty}\cdot\overline{t}
=t^{\infty}=\alpha\cdot\overline{t},
\end{equation*}
and so $\overline{s}\cdot\overline{t}=\overline{t}$. It remains to
note that, by Lemma~\ref{lm:lambda}, if $s\neq t$, then
$\overline{s}\neq\overline{t}$.
\end{proof}

\begin{lemma}\label{lm:direct}
Let $S$ be a finite semigroup and let $R$ be a finite right zero
semigroup. Then $\mathbf{C}(S\times R)\cong\mathbf{C}(S)$.
\end{lemma}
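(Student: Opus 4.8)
The plan is to build an isomorphism $\varphi:\mathbf{C}(S)\to\mathbf{C}(S\times R)$ directly on generators by sending $\overline{s}\mapsto\overline{(s,r)}$ for some fixed $r\in R$, and then show (a) this is well-defined and independent of the choice of $r$, (b) it extends to a surjective homomorphism, and (c) it is injective. The key observation driving everything is the behaviour of the first-level action maps. In $S\times R$ we have $\lambda_{(s,r)}(x,p)=(sx, rp)=(sx,p)$ since $R$ is right zero; so $\lambda_{(s,r)}$ does not depend on $r$ at all, and moreover $\lambda_{(s,r)}=\lambda_{(t,q)}$ as maps on $S\times R$ if and only if $\lambda_s=\lambda_t$ as maps on $S$. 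By Lemma~\ref{lm:lambda} applied in both semigroups, this already yields $\overline{(s,r)}=\overline{(t,q)}$ in $\mathbf{C}(S\times R)$ iff $\lambda_s=\lambda_t$ iff $\overline{s}=\overline{t}$ in $\mathbf{C}(S)$. In particular $\overline{(s,r)}$ is independent of $r$, and the assignment $\overline{s}\mapsto\overline{(s,r)}$ is a well-defined bijection between the generating sets.

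Next I would check that this assignment respects products, i.e. that for any word $\overline{a_1}\cdots\overline{a_k}$ over the generators of $\mathbf{C}(S)$ and any word $\overline{b_1}\cdots\overline{b_\ell}$, we have $\overline{a_1}\cdots\overline{a_k}=\overline{b_1}\cdots\overline{b_\ell}$ in $\mathbf{C}(S)$ if and only if the corresponding equality of the lifted words holds in $\mathbf{C}(S\times R)$. The cleanest way is to compare the action on the whole tree level-by-level using the state-transition formula from Section~\ref{sec:aux}: from that formula, when $\mathcal{C}(S)$ is in state $\overline{a_1}\cdots\overline{a_k}$ and reads $x$, it moves to $\overline{a_1x}\cdot\overline{a_2a_1x}\cdots\overline{a_k\cdots a_1x}$. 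Lifting, when $\mathcal{C}(S\times R)$ is in state $\overline{(a_1,r)}\cdots\overline{(a_k,r)}$ and reads $(x,p)$, the $R$-coordinates all collapse to $p$ because of right-zero multiplication, so it moves to $\overline{(a_1x,p)}\cdot\overline{(a_2a_1x,p)}\cdots\overline{(a_k\cdots a_1x,p)}$, which is exactly the lift of $q(\overline{a_1}\cdots\overline{a_k},x)$. Thus the action of a lifted word on the $S\times R$-tree is completely determined by, and mirrors, the action of the original word on the $S$-tree: the first-level transformation of the lift sends $(x,p)\mapsto(a_k\cdots a_1 x, p)$, and the states-after-reading are themselves lifts. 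A straightforward induction on the tree level then shows that two words act equally on the $S\times R$-tree iff their projections act equally on the $S$-tree, giving both that $\varphi$ is a well-defined homomorphism and that it is injective. Surjectivity is immediate since every generator $\overline{(s,r')}$ of $\mathbf{C}(S\times R)$ equals $\overline{(s,r)}=\varphi(\overline{s})$.

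The main obstacle, and the point that needs care rather than cleverness, is the bookkeeping in the induction: one must verify that at every level of the $S\times R$-tree the second coordinate is simply carried along unchanged (a consequence of $R$ being right zero being applied repeatedly inside the iterated wreath recursion), so that the $S\times R$-dynamics genuinely factors as the $S$-dynamics times a trivial action on the $R$-coordinates. Once that structural fact is pinned down, the equivalence ``equal on the $S\times R$-tree $\iff$ projections equal on the $S$-tree'' — and hence the isomorphism — follows formally.
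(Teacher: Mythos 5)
Your proposal is correct and follows essentially the same route as the paper: Lemma~\ref{lm:lambda} collapses the second coordinate so that the generators of $\mathbf{C}(S\times R)$ are in bijection with those of $\mathbf{C}(S)$, and the correspondence $\overline{s}\leftrightarrow\overline{(s,r_0)}$ is then checked to be an isomorphism. The only difference is that the paper leaves this last verification as ``easy to check,'' whereas you spell it out via the iterated wreath recursion (the $R$-coordinate being carried along unchanged), which is exactly the intended computation.
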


\begin{proof}
Let $s\in S$ and $r,t\in R$. Then it follows from
Lemma~\ref{lm:lambda} that $\overline{(s,r)}=\overline{(s,t)}$ in
$\mathbf{C}(S\times R)$. Hence $\mathbf{C}(S\times R)$ coincides
with $T=\langle\overline{(s,r_0)}:s\in S\rangle$ for any fixed $r_0\in R$. It
is now easy to check that $\overline{(s,r_0)}\mapsto\overline{s}$
gives rise to an isomorphism from $T$ onto $\mathbf{C}(S)$.
\end{proof}

\begin{corollary}
Let $R$ be a finite right zero semigroup. Then $\mathbf{C}(R)$ is
trivial.
\end{corollary}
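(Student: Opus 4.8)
The plan is to obtain this as an immediate consequence of Lemma~\ref{lm:direct}. The key observation is that a finite right zero semigroup $R$ can itself be realised as a direct product $T\times R$, where $T=\{e\}$ denotes the one-element semigroup: the map $r\mapsto(e,r)$ is an isomorphism, since $(e,r)(e,r')=(e,rr')=(e,r')$ reproduces exactly the right zero multiplication on $R$. Thus $R$ is, up to isomorphism, of the form covered by Lemma~\ref{lm:direct} with the ``$S$'' there taken to be $T$.

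Next I would record that $\mathbf{C}(T)$ is trivial. This is essentially immediate: $\mathcal{C}(T)$ has a single state $\overline{e}$, and since there is only one transformation $\lambda_e\colon T\to T$, Lemma~\ref{lm:lambda} forces $\mathbf{C}(T)$ to consist of the single element $\overline{e}$, which is idempotent.

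Combining these two facts, Lemma~\ref{lm:direct} applied to $S=T$ yields $\mathbf{C}(R)\cong\mathbf{C}(T\times R)\cong\mathbf{C}(T)$, which is trivial. There is no genuine obstacle in this argument; the only point requiring a moment's thought is spotting the isomorphism $R\cong T\times R$ so that Lemma~\ref{lm:direct} becomes applicable, after which the conclusion follows formally. (Alternatively, one could argue directly as in the proof of Lemma~\ref{lm:left-zero}, observing that reading any symbol $t$ from state $\overline{s}$ in $\mathcal{C}(R)$ outputs $t$ and returns to state $\overline{t}$, so that $\alpha\cdot\overline{s}$ depends only on the first symbol of $\alpha$ and hence all states act identically; but invoking Lemma~\ref{lm:direct} is the shortest route.)
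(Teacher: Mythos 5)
Your proof is correct and is exactly the derivation the paper intends: the corollary is stated immediately after Lemma~\ref{lm:direct} precisely because $R\cong T\times R$ for the trivial semigroup $T$, whence $\mathbf{C}(R)\cong\mathbf{C}(T)$ is trivial. (Only your parenthetical aside is slightly misstated --- in $\mathcal{C}(R)$ each state acts as the identity on $R^{\infty}$, so $\alpha\cdot\overline{s}=\alpha$ rather than merely ``depending on the first symbol'' --- but this does not affect your main argument.)
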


The proof of the last lemma of this section is easy, it can be found
in~\cite{M}.

\begin{lemma}\label{lm:D-class}
Let $S$ be a finite semigroup and let $a,b\in S$. If all $a$, $b$
and $ab$ belong to the same $\mathcal{D}$-class of $S$ then $ab\in
R_{a}\cap L_{b}$.
\end{lemma}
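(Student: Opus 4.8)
The lemma is the assertion that a finite semigroup is \emph{stable}, and I would prove it directly, without naming the property. The plan is to establish $ab\in R_a$ and $ab\in L_b$ separately; since the second statement is just the first one applied to the opposite semigroup $S^{\mathrm{op}}$, I would really only prove $ab\,\mathcal R\,a$. Here one inclusion, $(ab)S^1\subseteq aS^1$, holds in every semigroup, so the whole point is the reverse inclusion $aS^1\subseteq(ab)S^1$: the right-divisibility $a\ge_{\mathcal R}ab$ must be promoted to an equality once we know it is an equality of two-sided principal ideals.

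So, first, since $\mathcal D\subseteq\mathcal J$ in any semigroup, the hypothesis that $a$, $b$ and $ab$ lie in one $\mathcal D$-class gives in particular $a\,\mathcal J\,ab$, i.e.\ $a=p\,(ab)\,q$ for some $p,q\in S^1$. Writing $c=bq$, this reads $a=p\,a\,c$, and substituting this identity into its own middle factor $n-1$ more times yields $a=p^{\,n}\,a\,c^{\,n}$ for every $n\ge 1$.

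The crux is then an idempotent-power argument, available because $S$ is finite: choose $n$ so that $c^{\,n}$ is idempotent, $c^{\,n}=c^{\,2n}$. Feeding this $n$ into the identity above and multiplying on the right by $c^{\,n}$ gives $a\,c^{\,n}=p^{\,n}a\,c^{\,2n}=p^{\,n}a\,c^{\,n}=a$; hence $a=a\,c^{\,n}=a\,(bq)^{\,n}=(ab)\cdot q(bq)^{\,n-1}\in(ab)S^1$. This proves $aS^1=(ab)S^1$, i.e.\ $ab\,\mathcal R\,a$; the dual computation in $S^{\mathrm{op}}$, starting from $b\,\mathcal J\,ab$, gives $ab\,\mathcal L\,b$, and so $ab\in R_a\cap L_b$.

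I expect the only real obstacle to be the third paragraph — the step from the two-sided relation to the one-sided one, which genuinely uses finiteness through the existence of an idempotent power; the first two paragraphs are routine manipulation of Green's relations and principal ideals. A minor point to check is the $S^1$-versus-$S$ bookkeeping (what happens when $p$ or $q$ is the adjoined identity, or when the exponent $n-1$ equals $0$), but in each such case the conclusion $a\in(ab)S^1$ still holds.
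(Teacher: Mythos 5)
Your proof is correct: the iteration $a=p^na c^n$ together with the idempotent power $c^n=c^{2n}$ gives $ac^n=a$ and hence $a\in (ab)S^1$, which is exactly the standard stability argument for finite semigroups, and the dual computation handles $ab\,\mathcal{L}\,b$. The paper itself gives no proof of this lemma (it defers to the reference \cite{M}), and what you have written is precisely the argument that reference supplies, so there is nothing to reconcile.
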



\section{Proof of Theorem~\ref{th:whenisgroup}}\label{sec:group}

We recall that a semigroup $S$ is an inflation of a right zero
semigroup $T$ by null semigroups if $T\leq S$ and $S$ can be
partitioned into disjoint subsets $S_t$ (for each $t\in T$) such
that $t\in S_t$ and $S_uS_t=\{t\}$ for all $t,u\in S$.

\begin{proof}[Proof of Theorem~\ref{th:whenisgroup}]
The proof follows via the chain
$(1)\Rightarrow(3)\Rightarrow(2)\Rightarrow(1)$.

$(2)\Rightarrow(1)$ is clear.

$(3)\Rightarrow(2)$. Let $S$ be an inflation of a right zero
semigroup $T$. Then for all $s,t,x\in S$, we have $sx=tx$. Hence
$\lambda_s=\lambda_t$ and so, by Lemma~\ref{lm:lambda},
$\overline{s}=\overline{t}$ for all $s,t\in S$. It remains to prove
that for any fixed $s\in S$, the element $\overline{s}$ is an
idempotent. We have
$\overline{s}=\lambda_s(\overline{s},\ldots,\overline{s})$ and
$\overline{s}^2=\tau(\overline{s}^2)(\overline{s}^2,\ldots,\overline{s}^2)$.
Thus it suffices to prove that $\tau(\overline{s}^2)=\lambda_s$.
This holds since $s^2x=sx$ for all $x\in S$.

$(1)\Rightarrow(3)$. We will prove by induction on $|S|$ that if
$\mathbf{C}(S)$ is a group, then $S$ is an inflation of a right zero
semigroup by null semigroups. The base case $|S|=1$ is trivial. So
suppose the implication holds for all semigroups of cardinality
$<|S|$ and that $\mathbf{C}(S)$ is a group.

Let $\mathcal{T}_S$ be the transformation semigroup on $S$. The
subsemigroup $\langle\lambda_s:s\in S\rangle$ in $\mathcal{T}_S$ is
a homomorphic image of the group $\mathbf{C}(S)$ and so is a group.
This implies that the images and kernels of the mappings $\lambda_s$
must coincide. These conditions can be translated as
\begin{itemize}
\item
$sS=tS$ for all $s,t\in S$ and
\item
$sx=sy$ if and only if $tx=ty$, for all $s,t,x,y\in S$.
\end{itemize}
Notice that the condition that $sS=tS$ for all $s,t\in S$ is
equivalent to $sS=S^2$. The rest of the proof depends on whether
$S^2=S$ or not.

\vspace{\baselineskip}

{\bf\noindent Case 1: $S^2=S$.}

Then, by the observations above, $sS=S^2=S$ for all $s\in S$. So
each $s\in S$, acting via left-multiplication, permutes $S$. Then
for any $s\in S$, some power of $s$ is a left identity $e$ for $S$.
Then for all $s,x,y\in S$, the condition $sx=sy$ implies
$x=ex=ey=y$. Hence $S$ is left cancellative. The condition that
$sS=S$ for all $s\in S$ implies that $S$ is right simple.

Therefore $S$ is a right group and so $S=G\times R$ for some group
$G$ and a right zero semigroup $R$, see~\cite{CP}. By
Lemma~\ref{lm:direct}, we have $\mathbf{C}(G\times
R)=\mathbf{C}(G)$. Since the Cayley automaton semigroup over every non-trivial
group is a free semigroup, $G$ must trivial. Then $S\cong R$ is a
right zero semigroup and so (3) holds.

\vspace{\baselineskip}

{\bf\noindent Case 2: $S^2\neq S$.}

The condition of the case means that $S$ contains indecomposable
elements.

Recall that the kernels of all the mappings $\lambda_s$ coincide.
Partition $S$ into these kernel classes $A_1,\ldots,A_k$ and notice
that, for every $s\in S$, the equality $sx=sy$ holds if and only if
$x$ and $y$ come from the same class. Furthermore, since the
mappings $\lambda_s$ generate a subgroup in $\mathcal{T}_S$, it
follows that every kernel class $A_i$ contains an image point, which
must of course be an element of $S^2$ (the image of every
$\lambda_s$ is $S^2$).

The remainder of the proof we will work out in two subcases:

\vspace{\baselineskip}

{\bf\noindent Subcase a: for all $a\in S\setminus S^2$ there exists
$x\in S\setminus\{a\}$ with
$x(S\setminus\{a\})\neq(S\setminus\{a\})(S\setminus\{a\})$.}

Consider an arbitrary $a\in S\setminus S^2$ and find the
corresponding $x\in S\setminus\{a\}$. That
$x(S\setminus\{a\})\neq(S\setminus\{a\})(S\setminus\{a\})$ means
that there exists an element $uv\notin x(S\setminus\{a\})$ where
$u,v\in S\setminus\{a\}$. Since $uS=xS$, there exists some $b\in S$
with $uv=xb$. Obviously then $b=a$. Hence $xa\notin
x(S\setminus\{a\})$. That is, $xa\neq xy$ for all $y\in
S\setminus\{a\}$. This is equivalent to that $sa\neq sy$ for all
$s\in S$ and $y\in S\setminus\{a\}$. The kernel class $A$ that
contains $a$ is not a singleton, for it must contain an element from
$S^2$ and $a$ itself is indecomposable. Take an arbitrary $c\in
A\setminus\{a\}$. Then $sa=sc$ for any $s\in S$, a contradiction.

\vspace{\baselineskip}

{\bf\noindent Subcase b: There exists $a\in S\setminus S^2$ such
that for all $x\in S\setminus\{a\}$ there holds
$x(S\setminus\{a\})=(S\setminus\{a\})(S\setminus\{a\})$.}

Fix such an $a$. Obviously $T=S\setminus\{a\}$ is a subsemigroup of
$S$.

We will show now that $\mathbf{C}(T)$ is a homomorphic image of
$\mathbf{C}(S)$ and thus that $\mathbf{C}(T)$ is a group. Let $I$ be
the minimal ideal in $S$. Then $I$ is simple and so, being finite,
is completely simple. Hence $I$ is a Rees matrix semigroup. Take now
an arbitrary $i\in I$. Since $I$ is a Rees matrix semigroup, there
exists $e\in I$ such that $ei=i$. Then
$\tau(\overline{i}\cdot\overline{a})=\lambda_{ai}=
\lambda_{aei}=\tau(\overline{i}\cdot\overline{ae})$ and
\begin{equation*}
q(\overline{i}\cdot\overline{a},x)=\overline{ix}\cdot\overline{aix}=
\overline{ix}\cdot\overline{aeix}=q(\overline{i}\cdot\overline{ae},x)
\end{equation*}
for all $x\in S$. Thus
$\overline{i}\cdot\overline{a}=\overline{i}\cdot\overline{ae}$ in
$\mathbf{C}(S)$. Since $\mathbf{C}(S)$ is a group, we derive now
that $\overline{a}=\overline{ae}$. Since $e\in I$, the element $ae$
must lie in $I\subseteq T$. Hence $\overline{S}=\overline{T}$ in
$\mathbf{C}(S)$. Restricting the action of the states from
$\mathcal{C}(S)$ to $T^{\ast}$ yields the automaton
$\mathcal{C}(T)$. Therefore $\mathbf{C}(T)$ is a homomorphic image
of $\mathbf{C}(S)$, and as so is a group.

So, by the induction hypothesis, $T$ is an inflation of a right zero
semigroup by null semigroups. Suppose without loss of generality
that $a\in A_k$. Then $A_1,\ldots,A_{k-1},A_k\setminus\{a\}$ are the
correspondent null semigroups from $T$. For each $i$, let $e_i\in
A_i$ be the right zero in $A_i$. Then $S^2=\{e_1,\ldots,e_k\}$. In
particular, $e_k\neq a$ since $a$ is indecomposable. Take now
$a_i\in A_i$ and $a_j\in A_j$. Recall that $sx=sy$ as soon as $x$
and $y$ are from the same kernel class.
\begin{enumerate}
\item
If $a_i\neq a$ and $a_j\neq a$, then $a_ia_j=e_ie_j=e_j$.
\item
If $a_i\neq a$ and $a_j=a$, then $a_ia_j=a_ia=a_ie_k=e_k$.
\item
Let $a_i=a$ and $a_j\neq a$. Let $aa_j=e_m$ for some $m$. Then
$e_m=e_m^2=e_ma a_j$. Since $e_ma\in T$, it follows that
$e_maa_j=e_j$ and so $e_m=e_maa_j=e_j$. Hence $a_ia_j=e_j$.
\item
If $a_i=a_j=a$, then $a_ia_j=a^2=ae_k=e_k$.
\end{enumerate}

Thus $S$ is an inflation of a right zero semigroup
$\{e_1,\ldots,e_k\}$ and the induction step is established.
\end{proof}

\section{Proof of Theorem~\ref{th:finite}}\label{sec:finite}

\begin{proof}[Proof of Theorem~\ref{th:finite}]
$(\Rightarrow)$. Suppose that $\mathbf{C}(S)$ is finite. Take any
$\mathcal{H}$-class $H$ in $S$. With the seek of a contradiction,
suppose that $|H|>1$. Let $T=\{t\in S:tH\subseteq H\}$. Then for
every $t\in T$, by~\cite[Lemma~2.21]{CP}, the mapping
$\gamma_t:h\mapsto th$, $h\in H$, is a bijection of $H$ onto itself.
The set of all these bijections forms the so-called \emph{dual
Sch\"{u}tzenberger group} $\Gamma^{\ast}(H)$ of $H$.
By~\cite[Theorem~2.22]{CP} we have $|\Gamma^{\ast}(H)|=|H|$. Let
$\Delta(H)$ be the dual group of $\Gamma^{\ast}(H)$: that is has the
same underlying set as $\Gamma^{\ast}(H)$ but in $\Delta(H)$ we have
$\gamma_x\circ\gamma_y=\gamma_{xy}$ for all $x,y\in T$..

Take arbitrary $t_1,\ldots,t_k\in T$. Then for all $x\in H$:
\begin{equation*}
q(\overline{t_1}\cdots\overline{t_k},x)=
\overline{t_1x}\cdot\overline{t_2t_1x}\cdots\overline{t_k\cdots
t_1x}.
\end{equation*}
We also have
$\tau(\overline{t_1}\cdots\overline{t_k})=\tau(\overline{t_k\cdots
t_1})$.

Take now
$\gamma_{t_1},\ldots,\gamma_{t_k},\gamma_x\in\mathcal{C}(\Delta(H))$.
Then
\begin{equation*}
q(\overline{\gamma_{t_1}}\cdots\overline{\gamma_{t_k}},\gamma_{x})=
\overline{\gamma_{t_1}\circ\gamma_{x}}\cdot\overline{\gamma_{t_2}\circ
\gamma_{t_1}\circ\gamma_{x}}\cdots\overline{\gamma_{t_k}\circ\cdots\circ
\gamma_{t_1}\circ\gamma_{x}}
\end{equation*}
and $\tau(\overline{\gamma_{t_1}}\cdots\overline{\gamma_{t_k}})=
\tau(\overline{\gamma_{t_k}\circ\cdots\circ\gamma_{t_1}})=\tau(\overline{\gamma_{t_k\cdots
t_1}})$.

Take $t\in T$ and consider the restriction of $\overline{t}$ to
$H^{\ast}$. From the very definition of $\Gamma^{\ast}(H)$, it now
follows that the mapping
$\overline{t}\upharpoonright_{H^{\ast}}\mapsto\overline{\gamma_t}$
gives rise to a well-defined homomorphism from
$\langle\overline{t}\upharpoonright_{H^{\ast}}:t\in T\rangle$ onto
$\mathbf{C}(\Delta(H))$. It means that $\langle\overline{T}\rangle$
has a free semigroup on $|\Delta(H)|=|\Gamma^{\ast}(H)|=|H|$ points,
as a homomorphic image, and so $\langle\overline{T}\rangle$ is
infinite. Thus $\mathbf{C}(S)$ is infinite, a contradiction.

$(\Leftarrow)$. We will prove by induction on $|S|$ that if $S$ is
$\mathcal{H}$-trivial then $\mathbf{C}(S)$ is finite. The base case
$|S|=1$ is obvious. Assume that we have proved this for all
$\mathcal{H}$-trivial semigroups of size $\leq n$. Take now any
$\mathcal{H}$-trivial semigroup $S$ with $|S|=n+1$. Let $M$ be the
set of all maximal $\mathcal{D}$-classes from $S$ and let $I$ be the
complement of all these $\mathcal{D}$-classes in $S$. If $I$ is
empty then $M$ consists only of one $\mathcal{D}$-class and then $S$
is simple. Since it is $\mathcal{H}$-trivial we have that $S=L\times
R$ is a rectangular band, where $L$ is some left zero semigroup and
$R$ is some right zero semigroup. Combining
Lemmas~\ref{lm:left-zero} and~\ref{lm:direct}, we
have that $\mathbf{C}(S)$ is a right zero semigroup on $|L|$ points
and so is finite. So in the remainder of the proof we may assume
that $I\neq\varnothing$. Notice that $I$ is an ideal in $S$.

\vspace{\baselineskip}

{\bf\noindent Step 1: $\mathbf{\langle\overline{I}\rangle}$ is
finite.}

It suffices to prove that there are finitely many products
$\overline{i}\cdot\overline{i_1}\cdots\overline{i_k}\in\langle\overline{I}\rangle$
for any fixed $i\in I$. We have that
$\overline{i}\cdot\overline{i_1}\cdots\overline{i_k}$ and
$\overline{i}\cdot\overline{j_1}\cdots\overline{j_n}$ are
distinct if and only if the restrictions of
$\overline{i_1}\cdots\overline{i_k}$ and
$\overline{j_1}\cdots\overline{j_n}$ on
$S^{\infty}\overline{i}$ coincide. Notice that
$S^{\infty}\overline{i}\subseteq I^{\infty}$. Obviously
$\overline{i_1}\cdots\overline{i_k}$ and
$\overline{j_1}\cdots\overline{j_n}$ act on $I^{\infty}$
in the same way as the correspondent products from $\mathbf{C}(I)$
do. Now the claim of Step 1 follows from the induction hypothesis.

\vspace{\baselineskip}

{\bf\noindent Step 2:
$\mathbf{\overline{I}\langle\overline{S}\rangle}$ is finite.}

Take a typical element
$\overline{i}\cdot\overline{a_1}\cdots\overline{a_k}\in
\overline{I}\langle\overline{S}\rangle$. Then for all $x\in S$, we
have
\begin{equation*}
q(\overline{i}\cdot\overline{a_1}\cdots\overline{a_k},x)=
\overline{ix}\cdot\overline{a_1ix}\cdots\overline{a_k\cdots
a_1ix}.
\end{equation*}
Having that $I$ is an ideal in $S$, we deduce that
$|\overline{I}\langle\overline{S}\rangle|\leq
|I|\cdot|\langle\overline{I}\rangle|^{|S|}$.

\vspace{\baselineskip}

{\bf\noindent Step 3:
$\mathbf{\langle\overline{S}\setminus\overline{I}\rangle}$ is
finite.}

We need to prove that there are only finitely many distinct products
$\overline{a_1}\cdots\overline{a_k}$, where all
$a_1,\ldots,a_k$ lie in $S\setminus I$. Take such $a_1,\ldots,a_k$.
We have that
\begin{equation}\label{eq:equ}
q_x=q(\overline{a_1}\cdots\overline{a_k},x)=\overline{a_1x}\cdot\overline{a_2a_1x}
\cdots\overline{a_k\cdots a_1x}.
\end{equation}
Obviously, to prove Step 3, it suffices to establish that there only
finitely many such expressions~\eqref{eq:equ}. It follows
immediately from Step 2, that there are finitely many such
expressions with $a_1x\in I$.

Denote by $\mathcal{D}_M$ the restriction of $\mathcal{D}$ to
$S\setminus I$. Notice that if $(a_i\cdots a_1x,a_i\cdots
a_1)\notin\mathcal{D}_M$, for some $i$, then $a_i\cdots a_1x\in I$.
Indeed, if $a_i\cdots a_1x\in S\setminus I$, then $a_i\cdots a_1\in
S\setminus I$ and $x\in S\setminus I$. But since $D_{a_i\cdots
a_1x}\leq D_{a_i\cdots a_1}$ and $D_{a_i\cdots a_1x}\leq D_{x}$, we
now have that $a_i\cdots a_1\mathcal{D}x\mathcal{D}a_i\cdots a_1x$
(all three elements $a_i\cdots a_1$, $x$ and $a_i\cdots a_1x$ lie in
maximal $\mathcal{D}$-classes). In particular, if
$(a_1x,a_1)\notin\mathcal{D}_M$ then $a_1x\in I$ and so
$q_x\in\overline{I}\langle\overline{S}\rangle\cup\overline{I}$.

From the above it follows that it suffices to prove that for a fixed
$x\in S$ with $a_1x\mathcal{D}a_1$ (which is the same as
$a_1x\mathcal{D}_M a_1$), there exist only finitely many expressions
$q_x$. Let $m$ be the maximum number such that
\begin{equation}\label{eq:yes-we-restrict-further}
a_i\cdots a_1\mathcal{D}_M a_i\cdots a_1x,\quad 1\leq i\leq m.
\end{equation}
Since $a_{m+1}\cdots a_1x\in I$, by Step 2, it suffices to
prove that there are finitely many products
$\overline{a_1}\cdots\overline{a_m}$
with~\eqref{eq:yes-we-restrict-further}. Consider such one. We have
$a_i\cdots a_1\mathcal{D}_M x$ for all $i\leq m$. In particular then
we have $a_m\cdots a_1,\ldots,a_1$ are all from the same
$\mathcal{D}$-class (in $M$). This implies
$a_1\mathcal{D}a_2\mathcal{D}\cdots\mathcal{D}a_m$. Then, by
Lemma~\ref{lm:D-class},
$a_1\mathcal{L}a_2a_1\mathcal{L}\cdots\mathcal{L}a_m\cdots a_1$.
Since $\mathcal{L}$ is a right congruence, we then have that
$a_1x\mathcal{L}a_2a_1x\mathcal{L}\cdots\mathcal{L}a_m\cdots a_1x$.

Recall that it is enough to prove that there are only finitely many
products $\overline{a_1x}\cdot\overline{a_2a_1x}\cdots\overline{a_m\cdots a_1x}$
with~\eqref{eq:yes-we-restrict-further}.

Thus it suffices to prove that there exist only finitely many
different products $\overline{a_1}\cdots\overline{a_n}$
such that $a_1,\ldots,a_n$ all come from the same
$\mathcal{L}$-class inside a $\mathcal{D}$-class from $M$. Obviously
it suffices to prove this for a fixed $\mathcal{D}$-class $D$ in
$M$.

So, we need to prove that the set
\begin{equation*}
P=\{\overline{a_1}\cdots\overline{a_n}:
a_1\mathcal{L}a_2\mathcal{L}\cdots\mathcal{L}a_n,~a_1\in D\}
\end{equation*}
is finite and then Step 3 is established.

Again we consider for $x\in S$ the elements
$q_x=\overline{a_1x}\cdot\overline{a_2a_1x}
\cdots\overline{a_n\cdots a_1x}$, for
$\overline{a_1}\cdots\overline{a_n}\in P$. It suffices to
prove that there are finitely many such $q_x$-s.

If $a_1x\notin D$, then
$q_x\in\overline{I}\langle\overline{S}\rangle\cup\overline{I}$. So
we may assume that $x\in S$ is such that $a_1x\in D$. Find the
maximum $m$ such that $a_i\cdots a_1,a_i\cdots a_1x\in D$ for all
$i\leq m$. Since $a_{m+1}\cdots a_1x\in I$, as before, it suffices
to prove that there are finitely many products
$\overline{a_1x}\cdot\overline{a_2a_1x}
\cdots\overline{a_m\cdots a_1x}$.

We have $a_2a_1,\ldots,a_{m}a_{m-1}\in D$. Now, for all $j\leq m-1$,
$a_{j+1}\mathcal{D}a_{j+1}a_{j}\mathcal{D}a_{j}$ and so by
Lemma~\ref{lm:D-class} we have that $a_{j+1}a_j\in R_{a_{j+1}}\cap
L_{a_j}$. By Clifford-Miller Theorem we obtain that $L_{a_{j+1}}\cap
R_{a_j}$ contains an idempotent. Since $S$ is $\mathcal{H}$-trivial
and $a_1\mathcal{L}\cdots\mathcal{L}a_m$ we have that $a_{j}$ is an
idempotent and $a_{j+1}a_j=a_{j+1}$. It follows that
$\{a_1,\ldots,a_{m-1}\}$ forms a left zero subsemigroup in $S$.

By the token as above, we have that it suffices to show that there
are finitely many products $Q\subseteq P$ of the type
$\overline{a_1}\cdots\overline{a_n}$ with
$a_1,\ldots,a_n\in D$ and all of them lying in the same
$\mathcal{L}$-class and forming a left zero semigroup.

We will prove by induction on $k$ that the subset $Q_k\subseteq Q$,
consisting of those products
$\overline{a_1}\cdots\overline{a_n}$ such that there are
precisely $k$ idempotents among $a_1,\ldots,a_n$, is finite. This
will then prove Step 3.

\textbf{Base of induction.}

$k=1$. To prove the base case, it is enough to show that
$\overline{a}$ is of finite order for all idempotents $a\in D$.

Let $a$ be an arbitrary idempotent from $D$. We have
$q(\overline{a}^n,x)=\overline{ax}^n$. If $ax\notin D$, then
$q_x\in\overline{I}\langle\overline{S}\rangle\cup\overline{I}$. Let
$ax\in D$. Consider $q_{x,y}=q(q_x,y)$. If $ax$ is not an idempotent
then
$q_{x,y}=\overline{axy}\cdot\overline{axaxy}\cdots\overline{(ax)^ny}\in
\overline{S}\cdot\overline{I}\cup\overline{S}\cdot\overline{I}\langle\overline{S}\rangle$.

Let $E$ be the set of all idempotents $\mathcal{R}$-equivalent to
$a$. Let $X_a$ be the set of all $x\in D$ such that $ax$ is an
idempotent in $D$. Notice that for $x\in D$, $ax\in D$ if and only
if $L_a\cap R_x$ is an idempotent.

Since the set
$\overline{I}\cup\overline{I}\langle\overline{S}\rangle\cup
\overline{S}\cdot\overline{I}\langle\overline{S}\rangle\cup\overline{S}\cdot\overline{I}$
is finite, we have that there are only finitely many elements
$q(\overline{a}^n,x)$ with $x\in S\setminus X_a$. On the other hand,
$q(\overline{a}^n,x)=\overline{ax}^n$ and $ax\in E$, for all $x\in
X_a$. Thus, by wreath recursions for all elements $\overline{a}^n$,
$a\in E$, we have that $\overline{a}$ is of finite order for every
idempotent $a\in D$. Hence the base case is established.

\textbf{Induction step.}

We will do step $k\mapsto k+1$. Take an arbitrary product
$\pi=\overline{a_1}\cdots\overline{a_n}\in Q_{k+1}$. There
are precisely $(k+1)$ different $\mathcal{R}$-classes among
$R_{a_1},\ldots,R_{a_n}$. Obviously, it would suffice to prove the
step if $a_1,\ldots,a_n$ come from fixed $(k+1)$
$\mathcal{R}$-classes (and for every of these $\mathcal{R}$-classes
there is at least one representative among $a_1,\ldots,a_n$). In
particular, in the remainder of the proof all the products from
$Q_{k+1}$ will involve these fixed $\mathcal{R}$-classes.

With every such product $\pi$ we associate the correspondent
$\mathcal{L}$-class $L(\pi)=L_{a_1}=\cdots=L_{a_n}$. We have
$q_x=q(\pi,x)=\overline{a_1x}\cdots\overline{a_nx}$ for
all $x\in S$. Notice that if $a_1x\in D$ then
$a_1x\mathcal{L}\cdots\mathcal{L}a_nx$ and $a_i\mathcal{R}a_ix$. In
addition, for every $\mathcal{L}$-class in $D$ there exists $x$ such
that $a_1x$ lies in this $\mathcal{L}$-class.

Now we split $S$ into three disjoint sets:
\begin{itemize}
\item
The set $A(\pi)$ of all $x$ such that $a_1x\notin D$.
\item
The set $B(\pi)$ of all $x$ such that $a_1x\in D$ and there are at
most $k$ idempotents among $a_1x,\ldots,a_nx$.
\item
The set $C(\pi)$ of all $x$ such that $a_1x\in D$ and there are
precisely $(k+1)$ idempotents among $a_1x,\ldots,a_nx$.
\end{itemize}
Notice that $a_1x\in D$ if and only if $L(\pi)\cap R_x$ is an
idempotent. Thus each of $A(\pi),B(\pi),C(\pi)$ depends only on
$L(\pi)$.

If $x\in A(\pi)$ then
$q_x\in\overline{I}\cup\overline{I}\langle\overline{S}\rangle$.

Let $x\in B(\pi)$. Take $y\in S$. We have
$q(q_x,y)=\overline{a_1xy}\cdots\overline{a_nxa_{n-1}x\cdots
a_1xy}$. Let $m$ be maximum such that $a_ix\cdots a_1xy\in D$ for
all $i\leq m$. Recall that $a_1x\mathcal{L}\cdots\mathcal{L}a_nx$.
So, as before, we have that $a_1x,\ldots,a_{m-1}x$ are idempotents.
There are at most $k$ such idempotents and so $a_1x,\ldots,a_{m-1}x$
split in at most $k$ $\mathcal{R}$-classes. We have $a_ix\cdots
a_1xy=a_ixy\mathcal{R}a_ix$ and so there are at most $k$ idempotents
among $a_1xy,\ldots,a_{m-1}xy$. Thus for all $y\in S$, we have
$q(q_x,y)\in\overline{I}\cup\overline{I}\langle\overline{S}\rangle\cup
Q_k\overline{S}(\overline{I}\cup\overline{I}\langle\overline{S}\rangle)$.
This implies that there are only finitely many $q_x$ for every
$\pi\in Q_{k+1}$ and $x\in B(\pi)$.

Let, finally, $x\in C(\pi)$. Since $a_1x,\ldots,a_nx$ lie in exactly
$(k+1)$ $\mathcal{R}$-classes, we have that all of
$a_1x,\ldots,a_nx$ are idempotents. In particular
$q_x=\overline{a_1x}\cdots\overline{a_nx}\in Q_{k+1}$ and
$a_1x,\ldots,a_nx$ involve the same (fixed) $\mathcal{R}$-classes as
$a_1,\ldots,a_n$. We also mention that if we fix some
$\mathcal{L}$-class $L$ in $D$ such that $L=L(\rho)$ for some
$\rho\in Q_{k+1}$, then the set of all $q(\pi,x)$, where $\pi\in
Q_{k+1}$, $L=L(\pi)$ and $x\in C(\pi)$, exhausts the whole of
$Q_{k+1}$.

Let $M$ be the total (finite) number of elements in $\{q(\pi,x):x\in
A(\pi)\cup B(\pi),~\pi\in Q_{k+1}\}$. Let also $N=M^{|S|}|S|$ and
$p$ be the number of $\mathcal{L}$-classes in $D$.

Take now any product
$\pi=\overline{a_1}\cdots\overline{a_{N^p+1}}\in Q_{k+1}$.
We will prove that $\pi$ equals some element from $Q_{k+1}$ of
length less than $N^p+1$. That will complete the induction step and
the whole proof of Step 3.

Let $L_1,\ldots,L_q$ be all $\mathcal{L}$-classes, which in
intersection with the fixed $\mathcal{R}$-classes give $(k+1)$
idempotents. Assume without loss of generality that $L(\pi)=L_1$.
Note that $\pi=q(\pi,x)$ for some $x\in C(\pi)$.

By Pigeonhole Principle, we have that there exist $1\leq
i_1<\cdots<i_{N^{p-1}+1}\leq N^{p}+1$ such that
$\tau(\overline{a_{1}}\cdots\overline{a_{i_j}})=
\tau(\overline{a_{1}}\cdots\overline{a_{i_k}})$ and
$q(\overline{a_{1}}\cdots\overline{a_{i_j}},x)=
q(\overline{a_{1}}\cdots\overline{a_{i_k}},x)$ for all
$x\in A(\pi)\cup B(\pi)$ and $j<k$. Notice now that
$L(\pi)=L(\overline{a_{i_1}}\cdots\overline{a_{i_k}})$.
There exists $y\in C(\pi)$ such that $L(q(\pi,y))=L_2$. Analogously,
by Pigeonhole Principle, we have that there is a subsequence
$i_1\leq j_1<\cdots<j_{N^{p-2}+1}\leq i_{N^{p-1}+1}$ such that
$\tau(\overline{a_{1}y}\cdots\overline{a_{j_u}y})=
\tau(\overline{a_{1}y}\cdots\overline{a_{j_v}y})$ and
$q(\overline{a_{1}y}\cdots\overline{a_{j_u}y},x)=
q(\overline{a_{1}y}\cdots\overline{a_{j_v}y},x)$ for all
$x\in A(q(\pi,y))\cup B(q(\pi,y))$ and $u<v$. Proceeding in this way
in total at most $q$ times we arrive at two indices $u<v$, such that
$$\tau(\overline{a_{1}z}\cdots\overline{a_{u}z})=
\tau(\overline{a_{1}z}\cdots\overline{a_{v}z})$$ and
$$q(\overline{a_{1}z}\cdots\overline{a_{u}z},x)=
q(\overline{a_{1}z}\cdots\overline{a_{v}z},x)$$ for all
$z\in C(\pi)$, $x\in A(q(\pi,z))\cup B(q(\pi,z))$.

Finally, we remark that if $x\in C(\pi)$ and $y\in C(q(\pi,x))$,
then $xy\in C(\pi)$. Thus, from wreath recursions for elements
$\overline{a_{1}z}\cdots\overline{a_{u}z}$ and
$\overline{a_{1}z}\cdots\overline{a_{v}z}$ for all $z\in
C(\pi)$, it now follows that
$\overline{a_{1}}\cdots\overline{a_{u}}=
\overline{a_{1}}\cdots\overline{a_{v}}$ and so
$$\pi=\overline{a_1}\cdots\overline{a_{u}}\cdot\overline{a_{{v}+1}}
\cdots\overline{a_{N^{p}+1}}$$ is of length strictly less than
$N^{p}+1$.

Therefore the induction step is proved and so Step 3 is established.

\vspace{\baselineskip}

{\bf\noindent Step 4: $\mathbf{\langle\overline{S}\rangle}$ is
finite.}

We have
\begin{equation*}
\langle\overline{S}\rangle=\overline{I}\langle\overline{S}\rangle^1
\cup\langle\overline{S}\setminus\overline{I}\rangle\cup\langle\overline{S}\setminus\overline{I}\rangle
\overline{I}\langle\overline{S}\rangle^1
\end{equation*}
is finite by Steps 2 and 3.
\end{proof}

\section{Proof of Proposition~\ref{pr:free}}\label{sec:free}

\begin{proof}[Proof of Proposition~\ref{pr:free}]
$(\Rightarrow)$. Suppose that $\mathbf{C}(S)$ is free. Let $K$ be
the minimal ideal of $S$. Then $K$ is a Rees matrix semigroup
$\mathcal{M}[G;I,J;P]$ for some $J\times I$-matrix $P$ and group $G$
with identity $e$.  By~\cite[Theorem~3.4.2]{H} we even may assume
that $1\in I$, $1\in J$ and $p_{j1}=p_{1i}=e$ for all $i\in I$,
$j\in J$. Then the element $k=(1,e,1)\in K$ is clearly an
idempotent. Then $sk=sk^2$ and $sk\in I$ for all $s\in S$.
Therefore, by wreath recursions,
$\overline{k}\cdot\overline{s}=\overline{k}\cdot\overline{sk}$.
Since a free semigroup is left cancellative, we obtain
$\overline{s}=\overline{sk}$ and so, by Lemma~\ref{lm:lambda},
$\lambda_{s}=\lambda_{sk}$. Hence $\overline{S}$ coincides with
$\overline{L_k}$, where $L_k$ is the $\mathcal{L}$-class containing
$k$. Let $j\in J$ and $i\in I$. The condition
$\lambda_{(1,e,j)}=\lambda_{(1,e,j)(1,e,1)}$ implies that (since
$p_{j1}=p_{1i}=e$)
\begin{equation*}
(1,p_{ji},1)=(1,e,j)(i,e,1)=(1,e,j)(1,e,1)(i,e,1)=(1,e,1),
\end{equation*}
and so $p_{ji}=e$. Then for all $i,h\in I$ we have
$\overline{(i,e,1)}\cdot\overline{(h,e,1)}=\overline{(j,e,1)}\cdot\overline{(h,e,1)}$.
It follows that $\overline{(i,e,1)}=\overline{(h,e,1)}$ and hence
$\lambda_{(i,e,1)}=\lambda_{(h,e,1)}$. Thus $i=h$ and so $K$
contains only one $\mathcal{R}$-class.

Finally, since $\lambda_{s}=\lambda_{sk}$ for all $s\in S$, we have
that $S^2\subseteq K$. Hence the only non-singleton
$\mathcal{H}$-classes in $S$ must be those lying in $K$. If $K$
contains singleton $\mathcal{H}$-classes, then $S$ is
$\mathcal{H}$-trivial and so, by Theorem~\ref{th:finite},
$\mathbf{C}(S)$ is finite, a contradiction. Thus all
$\mathcal{H}$-classes in $K$ are non-singleton.

$(\Leftarrow)$. Since $K$ contains only one $\mathcal{R}$-class, we
have that $K=G\times R$ where $G$ is a group with identity $e$ and
$R$ is a right zero semigroup. Let $k=(h,s)\in K$ be as in the
hypothesis. Then for every $(g,r)\in K$, we have
$\lambda_{(g,r)}=\lambda_{(g,r)(h,s)}=\lambda_{(gh,s)}$. Then
$(g,t)=(g,r)(e,t)=(gh,s)(e,t)=(gh,t)$ and therefore $h=e$ and
$k=(e,s)$. Hence, by Lemma~\ref{lm:lambda},
$\overline{S}=\overline{H_{(e,s)}}$. As in the proof of
Theorem~\ref{th:finite}, we have that $\mathbf{C}(S)$ can be
homomorphically mapped onto $\mathbf{C}(H_{(e,s)})$. But by
Theorem~\ref{th:ss-groups}, $\mathbf{C}(H_{(e,s)})$ is free of rank
$|H_{(e,s)}|$, so $\mathbf{C}(S)$ is free of rank $|H_{(e,s)}|$.
\end{proof}

\section{Proof of Propositions~\ref{pr:right-zero}
and~\ref{pr:left-zero}}\label{sec:left-right}

\begin{proof}[Proof of Proposition~\ref{pr:right-zero}]
$(\Rightarrow)$. Suppose that $\mathbf{C}(S)$ is a right zero
semigroup. Let $a,b\in S$. Then
$\overline{b}\cdot\overline{a}=\overline{a}$. In particular,
$\lambda_{ab}=\lambda_{a}$. This implies that $abc=ac$ for all
$a,b,c\in S$.

$(\Leftarrow)$. Suppose that $abc=ac$ for all $a,b,c\in S$. Then
$\lambda_{ab}=\lambda_a$ for all $a,b\in S$. Now,
$\overline{b}\cdot\overline{a}=\overline{a}$ if and only if
$\lambda_{ab}=\lambda_a$ and
$\overline{bx}\cdot\overline{abx}=\overline{ax}$ for all $x\in S$.
By hypothesis, the latter is equivalent to
$\overline{bx}\cdot\overline{ax}=\overline{ax}$. By recursive
arguments we now obtain that
$\overline{b}\cdot\overline{a}=\overline{a}$ for all $a,b\in S$.
Thus $\mathbf{C}(S)$ is a right zero semigroup.
\end{proof}

\begin{proof}[Proof of Proposition~\ref{pr:left-zero}]
$(\Rightarrow)$. Suppose that $\mathbf{C}(S)$ is a left zero
semigroup. Since this left zero semigroup is finitely generated, it
is finite. So, by Theorem~\ref{th:finite}, $S$ is
$\mathcal{H}$-trivial. Let $I$ be the minimal ideal in $S$. Then
$\langle\overline{I}\rangle\subseteq \mathbf{C}(S)$ can be
homomorphically mapped onto $\mathbf{C}(I)$. Since $I$ is simple and
finite, it is a Rees matrix semigroup. Since $S$ is
$\mathcal{H}$-trivial, $I=X\times Y$ for some left zero semigroup
$X$ and a right zero semigroup $Y$. By
Lemmas~\ref{lm:left-zero} and~\ref{lm:direct},
$\mathbf{C}(I)$ is a right zero semigroup on $|X|$ points. A
homomorphic image of the left zero semigroup
$\langle\overline{I}\rangle$ must be a left zero semigroup. Hence
$|X|=1$ and so $I$ is a right zero semigroup.

Let $s\in S$ and $i\in I$. Then, since $\mathbf{C}(S)$ is a left
zero semigroup, $\overline{s}\cdot\overline{i}=\overline{s}$;
consequently $\lambda_{s}=\lambda_{is}$. By Lemma~\ref{lm:lambda},
$\overline{s}=\overline{is}\in\overline{I}$. In particular
$S\lambda_{s}\subseteq I$. Since this holds for each $s\in S$, it
follows that $S^2\subseteq I$. Thus $I=S^2$.

$(\Leftarrow)$. Suppose that the minimal ideal $I$ of $S$ coincides
with $S^2$ and that $I$ is a right zero semigroup.

Take an arbitrary $s\in S$ and fix $i\in I$. Then for every $x\in S$
we have that $sx\in I$ and so $isx=sx$. This implies that
$\lambda_{s}=\lambda_{is}$. By Lemma~\ref{lm:lambda}, we have that
$\overline{s}=\overline{is}$. Therefore $\overline{S}=\overline{I}$
and in particular $\mathbf{C}(S)=\langle\overline{I}\rangle$. So it
suffices to prove that $\overline{i}\cdot\overline{j}=\overline{i}$
for all $i,j\in I$. Note that if $\alpha\in S^{\infty}$, then
$\alpha\cdot\overline{i}\in I^{\infty}$. Since $I$ is a right zero
semigroup, $\overline{j}$ acts identically on $I^{\infty}$. Hence
$\alpha\cdot(\overline{i}\cdot\overline{j})=\alpha\cdot\overline{i}$
for all $\alpha\in S^{\infty}$ and so
$\overline{i}\cdot\overline{j}=\overline{i}$, as required.
\end{proof}

\section{Further Discussion}\label{sec:further}

In Theorem~\ref{th:whenisgroup} we proved that no Cayley automaton
semigroup can be a non-trivial group. In addition, it is proved
in~\cite{Mi} that if $S$ is a finite $\mathcal{H}$-trivial monoid,
then $\mathbf{C}(S)$ is a (finite) $\mathcal{H}$-trivial semigroup.
In fact, the author believes that every Cayley automaton semigroup
is $\mathcal{H}$-trivial and poses an

\begin{problem}\label{pr:aperiodic}
Are all Cayley automaton semigroups $\mathcal{H}$-trivial?
\end{problem}

The following proposition is an important consequence of
Theorem~\ref{th:finite}.

\begin{proposition}\label{pr:important}
Any infinite Cayley automaton semigroup contains a free semigroup of
rank $2$.
\end{proposition}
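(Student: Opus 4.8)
The plan is to contrapose: assuming $\mathbf{C}(S)$ is infinite, I want to exhibit a free subsemigroup of rank $2$. By Theorem~\ref{th:finite}, an infinite $\mathbf{C}(S)$ forces $S$ to contain a non-trivial $\mathcal{H}$-class $H$, and the argument in the $(\Rightarrow)$ direction of that theorem already does the heavy lifting: it shows that $\langle\overline{T}\rangle$, where $T=\{t\in S:tH\subseteq H\}$, surjects onto $\mathbf{C}(\Delta(H))$, which by Theorem~\ref{th:ss-groups} is a free semigroup of rank $|H|\geq 2$. So the first step is simply to extract from that proof the existence of a homomorphism from a subsemigroup of $\mathbf{C}(S)$ onto a free semigroup of rank $\geq 2$.

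The second step is the purely combinatorial observation that a free semigroup of rank $r\geq 2$ contains a free subsemigroup of rank $2$: indeed, if $x,y$ are two of the free generators, then $\langle x,yx\rangle$ (or, say, $\langle xy,xyy\rangle$, or $\{xy^n x : n\geq 1\}$ suitably chosen) is free of rank $2$ by a standard prefix-code / uniqueness-of-factorization argument. This is routine and I would state it as a one-line remark rather than prove it in detail.

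The third and slightly more delicate step is to \emph{lift} the free subsemigroup of rank $2$ through the surjection $\varphi\colon U\twoheadrightarrow F$, where $U\leq\mathbf{C}(S)$ and $F$ is free of rank $2$ on generators $u,v$. Picking any preimages $\tilde u,\tilde v\in U$ of $u,v$, the subsemigroup $\langle\tilde u,\tilde v\rangle$ maps onto $\langle u,v\rangle=F$; since $F$ is free of rank $2$, the induced map from the free semigroup on two letters onto $\langle\tilde u,\tilde v\rangle$ is injective (any relation among $\tilde u,\tilde v$ would push down to a relation among $u,v$), so $\langle\tilde u,\tilde v\rangle$ is itself free of rank $2$ and sits inside $\mathbf{C}(S)$. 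I expect the main obstacle to be purely expository — making sure the homomorphism onto a free semigroup is cleanly isolated from the proof of Theorem~\ref{th:finite} rather than re-deriving it — since once that surjection is in hand, steps two and three are completely formal. I would therefore keep the write-up short: cite the relevant portion of the proof of Theorem~\ref{th:finite} for the surjection onto $\mathbf{C}(\Delta(H))\cong$ free of rank $|H|$, invoke Theorem~\ref{th:ss-groups}, and then give the two-line lifting argument.
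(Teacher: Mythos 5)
Your proposal is correct and takes essentially the same route as the paper: deduce from Theorem~\ref{th:finite} that $S$ has an $\mathcal{H}$-class $H$ with $|H|>1$, and reuse the surjection built in the necessity part of that proof from a subsemigroup $\langle\overline{T}\rangle$ of $\mathbf{C}(S)$ onto a free semigroup of rank $|H|\geq 2$ (via Theorem~\ref{th:ss-groups}). The only difference is that the paper stops at ``has a free semigroup of rank $|H|$ as a homomorphic image,'' leaving implicit the standard lifting of free generators to obtain an actual free subsemigroup of rank $2$, which your second and third steps spell out explicitly.
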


\begin{proof}
Suppose $\mathbf{C}(S)$ is infinite. Then $S$ is not
$\mathcal{H}$-trivial. So $S$ contains an $\mathcal{H}$-class $H$
with $|H|>1$. Then as in the proof of the necessity of
Theorem~\ref{th:finite}, there exists a subsemigroup $T\leq S$ such
that $\langle\overline{T}\rangle$ has a free semigroup of rank $|H|$
as a homomorphic image.
\end{proof}

\begin{corollary}\label{cor:nice}
The free product of two trivial semigroups $\mathrm{Sg}\langle
e,f\mid e^2=e,~f^2=f\rangle$ and free commutative semigroups of rank
$>1$ are all automaton semigroups, but neither of them is a Cayley
automaton semigroup.
\end{corollary}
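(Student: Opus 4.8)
The plan is to exhibit, for each of the three kinds of semigroup listed, an automaton presentation witnessing that it is an automaton semigroup, and then to invoke the structure theory established above to rule out its being a Cayley automaton semigroup. For the positive direction I would recall that $\mathrm{Sg}\langle e,f\mid e^2=e,\ f^2=f\rangle$ is the free product of two trivial semigroups, and that both it and the free commutative semigroups of rank $\geq 2$ are known to be automaton semigroups — the two-generated free commutative semigroup $\mathbb{N}^2$ (and more generally $\mathbb{N}^k$) is a standard example, realized by a small automaton, and the free product of two one-element semigroups is likewise a classical example in the literature on automaton semigroups. Since these facts are not developed in the present paper I would simply cite them, as the corollary's phrasing (``are all automaton semigroups'') clearly intends.

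For the negative direction the key observation is that each of these semigroups is infinite. The free commutative semigroup of rank $>1$ is infinite because it contains the infinitely many distinct words $x^n$ in one generator; and $\mathrm{Sg}\langle e,f\mid e^2=e,\ f^2=f\rangle$ is infinite because it contains all alternating products $efef\cdots$ of every length, which are pairwise distinct. Hence, if any of these were a Cayley automaton semigroup $\mathbf{C}(S)$, then by Theorem~\ref{th:finite} the semigroup $S$ would fail to be $\mathcal{H}$-trivial, and then by Proposition~\ref{pr:important} the Cayley automaton semigroup $\mathbf{C}(S)$ would contain a free subsemigroup of rank $2$. So it suffices to check that none of the three target semigroups contains a free subsemigroup of rank $2$: a free commutative semigroup is commutative and therefore cannot contain a free semigroup on two generators (the two generators would have to commute, contradicting freeness); and $\mathrm{Sg}\langle e,f\mid e^2=e,\ f^2=f\rangle$ consists entirely of alternating words in $e$ and $f$, so every element satisfies $w^3 = w \cdot w^2$ with a bounded-length normal form pattern — more directly, the relation $w^{n+1}=w^n$ eventually holds for suitable words, or one simply notes that the semigroup is generated by idempotents and any subsemigroup containing a free semigroup of rank $2$ would need an element of infinite order that is not a proper power, which fails here since every element is a prefix-bounded alternating word; in any case it contains no copy of the free semigroup of rank $2$.

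Concretely, the cleanest route is: for $\mathrm{Sg}\langle e,f\mid e^2=e,\ f^2=f\rangle$, observe that every element has one of the four forms $(ef)^ne$, $(ef)^n$, $(fe)^nf$, $(fe)^n$, and that the map sending such a word to the pair (its first letter, its last letter, its length mod nothing) — actually, simply note that this semigroup is $\mathcal{J}$-trivial-like with all elements idempotent-adjacent, so it has no free subsemigroup of rank $2$ because in a free semigroup of rank $2$ the element $ab$ is not a proper power yet generates an infinite cyclic subsemigroup of non-idempotent elements, whereas here one checks directly that $xy$ for $x\neq y$ among $e,f$ satisfies $(xy)^2\cdot x = \ldots$, establishing a nontrivial relation. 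The main obstacle, and the only genuinely delicate point, is making this ``no free subsemigroup of rank $2$'' check fully rigorous for the two-generator idempotent free product; the free-commutative case is immediate from commutativity. I would phrase the argument so that only the combination of Theorem~\ref{th:finite} and Proposition~\ref{pr:important} is invoked, and then dispatch each target semigroup in one line, citing the external references for the positive (automaton semigroup) assertions.
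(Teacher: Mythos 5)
Your overall strategy is exactly the paper's: cite the literature for the positive assertions, and for the negative ones combine Theorem~\ref{th:finite} with Proposition~\ref{pr:important} to reduce everything to checking that the target semigroups contain no free subsemigroup of rank $2$. The free commutative case is correctly dispatched by commutativity. The gap is in the remaining case, $P=\mathrm{Sg}\langle e,f\mid e^2=e,\ f^2=f\rangle$, where every argument you actually offer is either false or insufficient. The claim that ``$w^{n+1}=w^n$ eventually holds'' is false: $(ef)^{n}$ are pairwise distinct, so $ef$ has infinite order. The claim that a free subsemigroup of rank $2$ ``would need an element of infinite order that is not a proper power, which fails here'' is also false: $ef$ is precisely such an element (it has infinite order, and length considerations show it is not a proper power). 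Finally, exhibiting a relation between the particular elements $ef$ and $fe$ (or between $e$ and $f$) does not rule out a free subsemigroup generated by some \emph{other} pair of elements of $P$, which is what must be excluded.

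The clean repair is a growth count. Every element of $P$ is an alternating word in $e$ and $f$ and is determined by its first letter and its length, so $P$ has exactly two elements of each length $k\geq 1$; hence at most $2n$ elements of length $\leq n$. If $u,v\in P$ generated a free subsemigroup of rank $2$, the $2^{n+1}-2$ distinct nonempty words of length $\leq n$ in $u,v$ would yield $2^{n+1}-2$ distinct elements of $P$, each of length at most $n\cdot\max(|u|,|v|)$, so $2^{n+1}-2\leq 2n\cdot\max(|u|,|v|)$ for all $n$, a contradiction. With that substituted for your case analysis, the proof is complete and coincides in substance with the paper's (the paper treats this step as immediate and does not write it out; it also exhibits an explicit two-state automaton for $P$ rather than citing the literature, but that difference is cosmetic).
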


\begin{proof}
That free commutative semigroups of rank $>1$ are automaton
semigroups can be found in~\cite{M}; and it is routine to check that
the automaton semigroup generated by the automaton

\begin{figure}[htp]
\centering
\includegraphics{cm_1auto.5}
\end{figure}
is isomorphic to $\mathrm{Sg}\langle e,f\mid e^2=e,~f^2=f\rangle$.
That neither of these semigroups is a Cayley automaton semigroup
follows immediately from Proposition~\ref{pr:important}.
\end{proof}

\begin{remark}
The characterization of those finite semigroups $S$ such that
$\mathbf{C}(S)$ is a right zero semigroup, is `close' to the
characterization of rectangular bands: the latter are precisely
those semigroups $S$ such that all the elements from $S$ are
idempotents and $abc=ac$ for all $a,b,c\in
S$,~\cite[Theorem~1.1.3]{H}.
\end{remark}

In the following example we show that it is possible for Cayley
automaton semigroup to be a \emph{non-trivial} left zero semigroup:

\begin{example}
Define a finite semigroup $S$ on four points $i$, $j$, $k$, $f$ with
the following multiplication table:
\begin{equation*}
\begin{array}{c|cccc}
& i & j & k & f\\
\hline
i & i & j & k & i \\
j & i & j & k & i \\
k & i & j & k & j \\
f & i & j & k & i
\end{array}
\end{equation*}
Then $\mathbf{C}(S)$ is a left zero semigroup on $2$ points.
\end{example}

\begin{proof}
One checks that the multiplication table indeed gives a semigroup.
By Lemma~\ref{lm:lambda}, $\overline{i}=\overline{j}=\overline{f}$.
Hence, by Proposition~\ref{pr:left-zero}, $\mathbf{C}(S)$ is a left
zero semigroup generated by $\overline{j}$ and $\overline{k}$. It
remains to notice that $\overline{j}\neq\overline{k}$. It follows
from Lemma~\ref{lm:lambda} and $f\lambda_{j}=jf=i\neq
j=kf=f\lambda_{k}$.
\end{proof}


\end{document}